\definecolor{calpolypomonagreen}{rgb}{0.12, 0.3, 0.17}
	\definecolor{asparagus}{rgb}{0.53, 0.66, 0.42}
	\definecolor{applegreen}{rgb}{0.55, 0.71, 0.0}
		\definecolor{darkpastelgreen}{rgb}{0.20, 0.70, 0.24}
		\definecolor{amaranth}{rgb}{0.85, 0.17, 0.31}
			\definecolor{darkpastelred}{rgb}{0.76, 0.23, 0.13}
\newtheorem{theorem}{Theorem}[section]
\newtheorem{lemma}[theorem]{Lemma}
\newtheorem{corollary}[theorem]{Corollary}
\newtheorem{thm}[theorem]{Theorem}
\newtheorem{cor}[theorem]{Corollary}
\theoremstyle{definition}
\newtheorem{remark}[theorem]{Remark}
\newtheorem{definition}[theorem]{Definition}
\newtheorem*{definition*}{Definition}
\newtheorem{example}[theorem]{Example}
\theoremstyle{definition}
\newtheorem{notationlist}{Notation list}
\renewcommand{\leq}{\leqslant}
\renewcommand{\geq}{\geqslant}
\newcommand{\eps}{\epsilon}
\newcommand{\ca}{\mathcal}
\newcommand{\wt}{\widetilde}
\newcommand{\daniele}[1]{\todo[color=green!40]{Daniele: #1}{}}
\def\N{\mathbf N}
\def\dim{\mathrm{dim}}
\def\GL{\mathrm{GL}}
\def\Gal{\mathrm{Gal}}
\def\Gm{\Gamma}
\def\urad{\mathrm R_u}
\def\Cay{\mathrm{Cay}}
\def\det{\mathrm{det}}
\def\SL{\mathrm{SL}}
\def\P{\mathbf P}
\def\Aut{\text{Aut}}
\def\F{\mathbf F}
\def\Q{\mathbf Q}
\def\Zint{\mathbf Z}
\def\Hom{\mathrm{Hom}}
\def\ca{\mathcal}
\def\p{\mathfrak p}
\def\a{\mathfrak a}
\def\b{\mathfrak b}
\def\q{\mathfrak q}
\def\ul{\underline}
\newcommand{\gen}[1]{\ensuremath{\langle #1\rangle}}
\newcommand\thickbar[1]{\accentset{\rule{.7em}{.8pt}}{#1}}
\def\blfootnote{\xdef\@thefnmark{}\@footnotetext}
\begin{document}
\title{Hilbert's irreducibility theorem via random walks}
\date{\today}

\author{Lior Bary-Soroker}
\address{Lior Bary-Soroker, Raymond and Beverly Sackler School of Mathematical Sciences, Tel Aviv University,
Tel Aviv 69978, Israel}
\email{barylior@tauex.tau.ac.il}

\author{Daniele Garzoni}
\address{Daniele Garzoni, Raymond and Beverly Sackler School of Mathematical Sciences, Tel Aviv University, Tel Aviv 69978, Israel}
\email{danieleg@mail.tau.ac.il}

\maketitle

\begin{abstract}
 Let $G$ be a connected linear algebraic group over a number field $K$, let $\Gm$ be a finitely generated Zariski dense subgroup of $G(K)$ and let $Z\subseteq G(K)$ be a thin set, in the sense of Serre. We prove that, if $G/\urad(G)$ is semisimple and  $Z$ satisfies certain necessary conditions, then a long random walk on a Cayley graph of $\Gm$ hits elements of $Z$ with negligible probability. We deduce corollaries to Galois covers, characteristic polynomials, and fixed points in group actions. We also prove analogous results 
 in the case where $K$ is a global function field.
 \end{abstract}

\section{Introduction} 

\subsection{Hilbert's irreducibility theorem}
This paper concerns generalizations of the celebrated Hilbert's Irreducibility Theorem (HIT). In polynomial terms, HIT asserts that, for every $n\geq 1$, if $f(t_1, \ldots, t_n,X)\in \Q(t_1, ..., t_n)[X]$ is irreducible, then there exists an \emph{irreducible specialization}, that is to say, $(\ul{t_1}, \ldots, \ul{t_n})\in \Q^n$ such that $f(\ul{t_1}, \ldots, \ul{t_n}, X)\in \Q[X]$ remains irreducible. Moreover, the Galois group of $f(\ul{t_1}, \ldots, \ul{t_n}, X)$ over $\Q$ is isomorphic to the Galois group of $f(t_1, \ldots, t_n,X)$ over $\Q(t_1, \ldots, t_n)$. Hilbert's original motivation came from the inverse Galois problem: In order to realize a finite group as a Galois group over $\Q$, given HIT it is sufficient to realize it over $\Q(t_1, \ldots, t_n)$, where one has more tools.

HIT found numerous applications in number theory, algebraic geometry, and algebra, see \cite{serre_mordell_weyl, serre_topics_galois, fried_jarden3, malle_matzat_igt}. 

HIT has an equivalent formulation in terms of rational points, an approach with traces going back at least to Hilbert's works on the inverse Galois problem. It uses the notion of  \textit{thin sets}, introduced by Serre \cite{serre_mordell_weyl} and Colliot-Th\'{e}l\`{e}ne--Sansuc \cite{colliotthelene_sansuc}.  
In this paper, a $K$-variety is a separated $K$-scheme of finite type. 

\begin{definition*}
\begin{enumerate}
    \item 
    Let $K$ be a field and $V$ be an irreducible $K$-variety.
A subset $Z$ of $V(K)$ is called \textbf{thin} if 
\begin{equation}
\label{eq_thin_sets}
    Z\subseteq C(K)\cup \bigcup_{i=1}^t\pi_{i}(V_i(K)),
\end{equation}
where $C$ is a proper closed subset of $V$, $t\in \Zint_{\geq 0}$, and for every $i=1, \ldots, t$, $V_i$ is an irreducible $K$-variety, $\dim(V_i) = \dim(V)$, and $\pi_i\colon V_i\to V$ is a dominant separable morphism with $\deg(\pi_i)\geq 2$.
    \item We say that $V$ has the \textbf{Hilbert property} (HP) if $V(K)$ is not thin.
\end{enumerate}
\end{definition*}

HIT is equivalent to the statement that $\mathbb{A}^n_\Q$ has the HP for every $n\geq 1$. Varieties having the HP satisfy an irreducibility specialization property, see \cite[Proposition 3.3.5]{serre_topics_galois}. Every connected linear algebraic group over a number field has the HP \cite{sansuc1981groupe, colliotthelene_sansuc, baysoroker_fehn_petersen}.
See \cite{corvaja_zannier_fundamental_group}, where the HP is proven 
for a geometrically non-rational variety for the first time, and see its extensions in 
\cite{demeio2020}.

For many $V$ satisfying the HP, not only $V(K)$ is not thin, but a thin set $Z$ is small in $V(K)$ in much stronger senses.  For example, if $V=\mathbb{A}^n_\Q$, then $H= V(\Q)\smallsetminus Z$  is dense in the real topology, it contains arithmetic progressions, and $H$ has asymptotic density $1$, with respect to counting either integral or rational points by height.

For a connected linear algebraic group $V=G$ over a number field $K$,  Liu \cite{FeiLiu} and Corvaja \cite{corvaja2007rational}, based on Zannier \cite{zannier2010hilbert} and Ferretti--Zannier \cite{ferretti2007equations}, proved that thin sets $Z$ are small with respect to the group structure. More precisely, let $\Gm$ be a Zariski dense subgroup of $G(K)$
and let $Z$ be a thin set of $G(K)$ as in \eqref{eq_thin_sets}. For every $i$, assume that $V_i$ is normal and $\pi_i\colon V_i\to G$ is finite and \textit{ramified}. Then, $\Gm\nsubseteq Z$.


The ramification assumption is essential for the conclusion. Indeed, if $\pi\colon G_1\to G$ is a nontrivial isogeny of algebraic groups; in particular, $\pi$ is unramified, then $\Gm:=\pi(G_1(K))$ is a Zariski dense subgroup of $G(K)$, and also a thin set. We remark that the connection between the HP and ramification is made explicit in \cite{corvaja_zannier_fundamental_group}.

The objective of this paper is to obtain a quantitative version of this result. Loosely speaking, we will show that, under suitable conditions on $G$, ``most'' points of $\Gm$ do not belong to $Z$. We will measure this by performing long random walks on a Cayley graph of $\Gm$. This direction is inspired by the works of Rivin \cite{rivin2008walks}, Jouve--Kowalski--Zywina \cite{kowalski_jouve_zywina}, Lubotzky--Meiri \cite{lubotzky_meiri_powers}, and Lubotzky--Rosenzweig \cite{lubotzky_rosenzweig}.


\subsection{Random walks}
\label{sub_intro_random_walks}
We fix notation for this subsection.

\begin{notationlist}
\label{notation_1} 
\begin{itemize} 
    \item[$\diamond$] $K$ is a number field.
    \item[$\diamond$] $G$ is a connected linear algebraic $K$-group such that $G/\text R_u(G)$ is either trivial or semisimple.
    \item[$\diamond$] $\Gamma$ is a finitely generated Zariski dense subgroup of $G(K)$.
    \item[$\diamond$] $A$ is a finite symmetric generating multiset of $\Gm$, with $1\in A$.
    \item[$\diamond$] $\omega_n$ is the $n$-th step of a random walk on the Cayley graph $\Cay(\Gm,A)$, starting at the identity.
    \item[$\diamond$] For every $G$, we define a function $\Xi\colon \mathbf R_{>0} \times \mathbf Z_{\geq 1} \to \mathbf R_{>0}$, as follows:
\[
   \Xi(C,n) = \begin{cases}
    e^{-n/C} &\mbox{if $G$ is semisimple}\\
     Cn^{-1/(10\,\dim \, G)} &\mbox{otherwise.} 
     \end{cases}
\]

\end{itemize}

\end{notationlist}


Let us clarify the notation: 
By $\urad(G)$, we  denote the unipotent radical of $G$. 
By $A$ symmetric, we mean that  $g$ and $g^{-1}$ appear in $A$ with the same multiplicity, for every $g\in \Gm$. The Cayley graph $\Cay(\Gm,A)$ attached to $\Gm$ and $A$ has  $\Gm$ as the set of vertices and $\{ (g,ga) :  a\in A, g\in \Gm\}$ as  the multiset of edges. Then,  $\omega_n$ is the $n$-th step of the uniform random walk on $\Cay(\Gm,A)$, starting at the identity. 


Our main  theorem asserts that, under our assumptions on $G$, the random walk $\omega_n$ is asymptotically almost never in a thin set:

\begin{thm}
\label{t_main_new}
Let $Z$ be a thin subset of $G(K)$ as in \eqref{eq_thin_sets} with the additional assumptions that  $V_i$ is normal and $\pi_i\colon V_i\to G$ is finite and ramified, for each $i$. Then, 
there exists $C>0$ such that 
\[
    \P(\omega_n \in Z) \leq \Xi(C,n) .
\]
In particular, $\displaystyle\lim_{n\to \infty}\P(\omega_n \in Z) = 0 $.
\end{thm}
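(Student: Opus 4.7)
My plan is to reduce the problem, via strong approximation, to a spectral-gap estimate for the image of the random walk in a finite quotient of $\Gm$. Writing $Z\subseteq C(K)\cup \bigcup_i \pi_i(V_i(K))$ as in \eqref{eq_thin_sets}, I would first dispose of the subvariety part $C$ (using that a proper subvariety reduces to a set of proportion tending to zero in $G(\F_\p)$) and, by a union bound over $i$, reduce to bounding $\P(\omega_n\in \pi(V(K)))$ for a single finite ramified cover $\pi\colon V\to G$ with $V$ normal.

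The next step is to select a prime $\p$ of $K$ of good reduction for $\pi$, $G$, and $\Gm$, and to consider the reduction map $\rho_\p\colon\Gm\to Q_\p$, where $Q_\p$ is the image of $\Gm$ inside $G(\ca O_K/\p)$. Any $g\in\Gm\cap \pi(V(K))$ reduces to an element of $\bar\pi(\bar V(\F_\p))$. The key geometric input is that, because $\pi$ is ramified of degree at least $2$, the geometric Chebotarev density theorem applied to the Galois closure of $\bar\pi$, combined with the Lang--Weil estimates, yields $|\bar\pi(\bar V(\F_\p))\cap Q_\p|/|Q_\p|\leq 1-\delta$ for some $\delta>0$ independent of $\p$; in other words, the image of $Z$ in $Q_\p$ occupies a proportion at most $1-\delta$.

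In the semisimple case I would invoke the super-strong approximation results of Salehi Golsefidy--Varj\'u to obtain, for a suitable finite set $\ca S$ of primes of good reduction, a uniform spectral gap for the Cayley graphs $\Cay(Q_\p, \rho_\p(A))$. The expander mixing lemma then implies that $\rho_\p(\omega_n)$ is $e^{-n/C'}$-close in $\ell^2$ to the uniform distribution on $Q_\p$, which combined with the density bound gives $\P(\omega_n\in Z)\leq (1-\delta)+e^{-n/C'}$. Splitting the walk into blocks of a fixed length $n_0$ on which $e^{-n_0/C'}\leq \delta/2$, and chaining this estimate across blocks by conditioning, boosts the factor $1-\delta$ into the required exponential rate $e^{-n/C}$.

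For the general case I would factor through the semisimple quotient $G\to G/\urad(G)$ and treat separately the contribution coming from the semisimple quotient (where the exponential bound above applies to the projected walk) and the purely unipotent contribution, where $\Gm$ is virtually nilpotent and no spectral gap is available. Here, one replaces expansion by a direct escape/heat-kernel estimate on the nilpotent quotient, taking advantage of its polynomial growth, to produce the polynomial rate $Cn^{-1/(10\,\dim G)}$. I expect the main obstacle to lie precisely in this combination step: one must arrange the union-bound decomposition of $Z$ so that the exponential gain on the semisimple side is not contaminated by the unipotent fibers, while simultaneously extracting a quantitative escape bound for the unipotent walk without accumulating dimension-dependent losses beyond the exponent $10\,\dim G$.
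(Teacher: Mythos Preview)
Your single-prime argument does not give decay at all. For a fixed prime $\p$, the inequality $\P(\omega_n\in Z)\leq (1-\delta)+e^{-n/C'}$ says only that $\limsup_n \P(\omega_n\in Z)\leq 1-\delta$; the ``block-chaining'' step is illusory, since $\{\omega_n\in Z\}$ is a single endpoint event and conditioning on the walk at intermediate times produces no multiplicative gain. What is actually needed is a \emph{large sieve}: one uses many primes $\p_1,\dots,\p_t$ at once, so that $\omega_n\in Z$ forces the conjunction $\bigcap_i\{\rho_{\p_i}(\omega_n)\in\bar Z_{\p_i}\}$, and one controls the near-independence of these events via the surjectivity $\Gm\twoheadrightarrow G(k(\p_i))\times G(k(\p_j))$ together with Chebyshev's inequality (this is the group sieve of Kowalski and Lubotzky--Meiri). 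In the semisimple case super-strong approximation lets $t$ be taken exponential in $n$, yielding $e^{-n/C}$; without expansion one uses the crude gap $\pi_*\leq 1-1/(|A||Q|^2)$ valid for any finite Cayley graph, takes $t\asymp n^{1/(10\dim G)}$, and obtains the polynomial rate --- no heat-kernel estimate on a nilpotent quotient enters.

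Your treatment of the non-simply-connected case is also off target. The obstruction is not the unipotent radical but the failure of strong approximation: when $G$ is not simply connected the image $Q_\p$ of $\Gm$ in $G(k(\p))$ is a proper subgroup, and for an unramified isogeny $\pi$ the set $\bar\pi(\bar V(k(\p)))$ can cover all of $Q_\p$ (so $\delta=0$); this is exactly why the theorem fails without the ramification hypothesis. The paper instead pulls the whole picture back along the universal cover $p\colon\wt G\to G$, lifting the walk to $\wt\Gm=p^{-1}(\Gm)\subseteq\wt G(E)$ and forming $\wt V=V_E\times_{G_E}\wt G_E$. Ramification of $\pi$ (together with normality of $V$ and Zariski's main theorem) is used precisely to show that no component of $\wt V$ maps birationally to $\wt G_E$, so the pulled-back set is still thin in $\wt G_E(E)$, and one finishes by the simply connected case. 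In particular, ramification is \emph{not} what secures the density bound $1-\delta$ over $k(\p)$ --- that only needs $\deg\pi\geq 2$ --- but rather what keeps the problem nontrivial after lifting to $\wt G$.
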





If $G$ is semisimple, Theorem~\ref{t_main_new} gives exponential decay. When $G$ is not semisimple,  one cannot get exponential decay. Indeed, take $G=\mathbb A^1$, $\Gm=\Zint$,  $A=\{0,1,-1\}$, and consider the thin set $Z=\{0\}$. Then, $\omega_n$ is a classical one dimensional random walk, hence
\begin{equation}\label{r_intro_example}
    \P(\omega_n \in Z)=\P(\omega_n =0) \geq c n^{-1/2}.
\end{equation}
for some $c>0$.

If $G$ is not semisimple, then $C$ in Theorem~\ref{t_main_new}  depends on $G$, $K$, $\Gm$, $|A|$, and on $Z$. In fact, the dependence on $Z$ is via the \emph{complexity}, see Definition~\ref{def_complexity}. If $G$ is semisimple, $C$
depends also on the spectral gap of the Cayley graphs induced on the congruent quotients of $\Gm$; see \S~\ref{sec_simply_connected} for further details. In all the theorems of the introduction, $C$ will depend on these parameters.

As in Liu's result, the assumption that the morphisms $\pi_i$ are ramified is crucial: If $\pi\colon G_1 \to G$ is a nontrivial isogeny of algebraic groups, then $\P(\omega_n\in \pi(G_1(K))) \geq c>0$. See Remark~\ref{r_necessary_ramified} below for further details.

The assumption that the varieties $V_i$ are normal and that the morphisms are finite is technical and can be removed. See Remark~\ref{r_not_normal} and Lemma~\ref{l_pullback}.

It seems that Theorem~\ref{t_main_new} was known to experts in the case where $G$ is semisimple and simply connected (\cite[\S~6.2]{kowalski2010sieve}). 
We note that the estimates in \emph{loc.\ cit.} are not valid for semisimple non-simply connected groups, as the example of a nontrivial isogeny shows.

We apply Theorem~\ref{t_main_new} to get irreducibility, in the spirit of   the classical HIT formulation.

\begin{cor}
\label{c_main_irreducibility}
Let $V$ be a normal irreducible $K$-variety, and $\pi\colon V\to G$ be a finite surjective morphism. Assume that every non-scalar subcover of $\pi$ is ramified. 
Then, there exists $C>0$ such that 
\[
    \P(\pi^{-1}(\omega_n) \mbox{ is not integral\,}) \leq \Xi(C,n).
\]
\end{cor}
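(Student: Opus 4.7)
The plan is to realise the event $\{\pi^{-1}(\omega_n) \text{ is not integral}\}$ as $\{\omega_n \in Z\}$ for a subset $Z \subseteq G(K)$ that is thin in the precise sense required by Theorem~\ref{t_main_new}, and then invoke that theorem directly. Accordingly, set
\[
Z := \{g \in G(K) : \pi^{-1}(g) \text{ is not integral}\}.
\]
First I would dispose of the trivial case $\deg(\pi)=1$: $V$ is normal, so $\pi$ is an isomorphism and $Z$ is empty. Assume henceforth $\deg(\pi)\geq 2$. Then $\pi$ itself, via the trivial factorisation $V \to V \to G$, counts as a non-scalar subcover of $\pi$, so by hypothesis $\pi$ is ramified and its branch locus $C \subsetneq G$ is a proper closed subset.

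The core step will be a Galois-theoretic decomposition of $Z \setminus C(K)$ into images of non-scalar subcovers of $\pi$. I would take the Galois closure $\widetilde\pi\colon \widetilde V \to G$ of $\pi$ (i.e.\ normalise $G$ in the Galois closure of $K(V)/K(G)$), let $\mathcal{G}$ denote its finite Galois group, and let $H \leq \mathcal{G}$ be the subgroup corresponding to $V$. For $g \in G(K)\setminus C(K)$, the fibre $\widetilde V_g$ is \'etale over $K$ and $\mathrm{Gal}(\overline K/K)$ acts through a subgroup $M_g \leq \mathcal{G}$, well defined up to conjugation; standard Galois theory then gives that $\pi^{-1}(g) = \widetilde V_g/H$ is integral iff $M_g$ acts transitively on $\mathcal{G}/H$. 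If transitivity fails, I would pick a proper $M_g$-orbit $S \subsetneq \mathcal{G}/H$ and set $H' := \mathrm{Stab}_{\mathcal{G}}(S)$ (stabiliser for the left action on $\mathcal{G}/H$); then $H' \lneq \mathcal{G}$, $H'$ contains both $M_g$ and a conjugate $\gamma H \gamma^{-1}$ (the point-stabiliser of any element of $S$), the cover $\pi_{H'}\colon V_{H'} := \widetilde V/H' \to G$ has degree $[\mathcal{G}:H']\geq 2$, and the coset $H' \in \mathcal{G}/H'$ is $M_g$-fixed, so $V_{H'}$ admits a $K$-point above $g$. Since $\widetilde V/(\gamma H \gamma^{-1}) \cong \widetilde V/H = V$ as $G$-covers, $V_{H'}$ is (up to isomorphism) an intermediate cover in a factorisation $V \to V_{H'} \to G$ of $\pi$, hence a non-scalar subcover of $\pi$, and therefore ramified by the corollary's hypothesis.

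Bundling the finitely many subgroups $H' \lneq \mathcal{G}$ that arise in this way, one obtains
\[
Z \;\subseteq\; C(K) \,\cup\, \bigcup_{j=1}^t \pi_{H_j'}\bigl(V_{H_j'}(K)\bigr),
\]
where each $V_{H_j'}$ is normal irreducible (being a quotient of the normal irreducible $\widetilde V$ by a finite group) and each $\pi_{H_j'}$ is finite surjective, ramified, of degree at least $2$. This is precisely the form of thin set to which Theorem~\ref{t_main_new} applies, and the bound $\P(\omega_n \in Z) \leq \Xi(C,n)$ follows.

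The main obstacle will be the group-theoretic step above: identifying the cover $V_{H'}$ obtained from a non-transitive orbit with a \emph{subcover of $\pi$ itself}, rather than merely of a twist of it, so that the ramification hypothesis genuinely applies. This reduces to the observation that conjugate subgroups of $\mathcal{G}$ yield isomorphic $G$-covers; since both the ramification condition and the thin-set hypotheses of Theorem~\ref{t_main_new} depend only on the isomorphism class of each $\pi_{H_j'}\colon V_{H_j'}\to G$, that observation suffices.
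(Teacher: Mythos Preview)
Your argument is correct and takes a genuinely different route from the paper's. The paper first separates off the constant-field part of $\pi$ by factoring $V\to G_L\to G$ with $L=\thickbar K\cap K(V)$, then pulls everything back to the universal cover $\wt G_E$, takes the Galois closure $\widehat V\to\wt G_E$ there, and bounds $\P(\pi^{-1}(\omega_n)\text{ not integral})$ by $\P(\ca G_{\wt\omega_n}\neq\ca G)$, finally invoking Corollary~\ref{c_main_new} in the simply connected setting (where, by Remark~\ref{r_no_assumptions}, no ramification hypothesis is needed). You instead stay over $G$, form the Galois closure $\wt V\to G$ directly, and exhibit the non-integral locus as a thin set built from intermediate quotients $V_{H'}$, feeding these into Theorem~\ref{t_main_new}. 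Your route is more elementary and self-contained; the paper's route reuses the Galois-cover corollary and sidesteps any verification of ramification for the pieces of the thin set.

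One terminological wrinkle: in the paper, ``non-scalar subcover'' means a subcover that is \emph{not} a constant-field extension $G_{L'}\to G$; this is why the paper's proof bothers to factor through $G_L$. You read it as ``degree $\geq 2$'', which would make the hypothesis strictly stronger (forcing $V$ geometrically irreducible). Fortunately your argument survives the intended reading with a one-line addition: each of your $V_{H'}$ is constructed to have a $K$-point above $g$, whereas $G_{L'}(K)=\varnothing$ for $L'\supsetneq K$, so no $V_{H'}$ can be a scalar subcover and the ramification hypothesis applies. You should insert this observation where you assert that $V_{H'}$ is a non-scalar subcover.
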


The original motivation of Hilbert came from Galois theory, where he used HIT to specialize Galois groups from $\Q(t)$ to $\Q$. The next application is in this spirit. 
If $\pi\colon V\to G$ is a generically Galois cover of normal irreducible $K$-varieties with group $\mathcal{G}=\Gal(K(V)/K(G))$, then every $g\in G(K)$ comes with the \textit{specialized Galois group}, denoted $\mathcal{G}_g$, which  is the automorphism group of the residue field extension at a point $x\in V$ lying above $g$; this group is isomorphic to the decomposition group modulo the inertia group. Hence, the specialized Galois group is always a sub-quotient of $\mathcal{G}$. We show that typically it equals $\mathcal{G}$:

\begin{cor}
\label{c_main_new}
Let $V$ be a normal irreducible $K$-variety, and $\pi\colon V\to G$ be a finite surjective morphism, which is generically Galois with group $\mathcal{G}$. Assume that every nontrivial geometrically irreducible subcover of $\pi$ is ramified. 
Then, there exists $C>0$ such that 
\[
    \P(\mathcal{G}_{\omega_n} \neq \mathcal{G})\leq \Xi(C,n).
\]
\end{cor}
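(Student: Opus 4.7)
The plan is to exhibit the set $Z := \{g \in G(K) : \mathcal{G}_g \neq \mathcal{G}\}$ as a thin subset of $G(K)$ of the shape required by Theorem~\ref{t_main_new} and then invoke that theorem. Write $\mathcal{B}\subseteq G$ for the branch locus of $\pi$: this is a proper closed subset of $G$, since $\pi$ is generically étale, and $\mathcal{B}(K)$ can be absorbed into the $C(K)$-part of the thin set. The real work is to describe $Z\smallsetminus\mathcal{B}(K)$.

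Outside $\mathcal{B}$ the cover $\pi$ is étale, and the standard description of fibers applies. The main subtlety is that $V$ is not assumed geometrically irreducible, so I would first introduce the geometric Galois group $\mathcal{G}^\circ := \Gal(\bar{K}(V)/\bar{K}(G))$ and the field of constants $L = \bar{K}\cap K(V)$, so that $\mathcal{G}/\mathcal{G}^\circ \cong \Gal(L/K)$. For each $g\in G(K)\smallsetminus\mathcal{B}(K)$ the residue field at every point above $g$ contains $L$, so $\mathcal{G}_g$ always surjects onto $\mathcal{G}/\mathcal{G}^\circ$; hence $\mathcal{G}_g = \mathcal{G}$ if and only if $\mathcal{G}_g \supseteq \mathcal{G}^\circ$. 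When this fails, $\mathcal{G}_g$ must sit inside some maximal subgroup $H < \mathcal{G}$ with $H\mathcal{G}^\circ = \mathcal{G}$. For any such $H$ the quotient $V_H := V/H$ is normal, irreducible, and \emph{geometrically} irreducible, the structural map $\pi_H\colon V_H \to G$ is finite of degree $[\mathcal{G}:H]\geq 2$, and a routine fiber computation shows that $\mathcal{G}_g \subseteq H$ forces $g \in \pi_H(V_H(K))$. Combining these observations yields
\[
Z \;\subseteq\; \mathcal{B}(K)\;\cup\;\bigcup_H \pi_H(V_H(K)),
\]
where the (finite) union runs over representatives of $\mathcal{G}$-conjugacy classes of maximal subgroups $H<\mathcal{G}$ with $H\mathcal{G}^\circ = \mathcal{G}$.

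Each map $\pi_H$ appearing above is then a nontrivial geometrically irreducible subcover of $\pi$, hence is ramified by hypothesis, while $V_H$ is normal by construction. This exhibits $Z$ as a thin set of the form required by Theorem~\ref{t_main_new}, and invoking that theorem gives $\P(\omega_n \in Z) \leq \Xi(C,n)$, which is the desired bound. The delicate point of the argument is the interplay between the arithmetic and geometric Galois groups: the thin set must be arranged so that only geometrically irreducible intermediate covers appear, since that is exactly what the weaker ramification hypothesis of the corollary allows us to conclude is ramified.
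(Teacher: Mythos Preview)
Your proof is correct and follows essentially the same route as the paper: both reduce to the intermediate covers $V_H=V/H$ for maximal $H<\mathcal G$, discard those that are not geometrically irreducible, check that the remaining $\pi_H$ are ramified by hypothesis, and apply Theorem~\ref{t_main_new}. The only cosmetic difference is that you filter out the irrelevant $H$ up front via the condition $H\mathcal G^\circ=\mathcal G$, whereas the paper takes all maximal $H$ and then observes that when $V_H$ is not geometrically irreducible one has $V_H(K)=\varnothing$; these are equivalent ways of making the same reduction.
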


A  different sense for smallness of a thin set $Z$ in an algebraic group $G$ is the following: Take $\Gm \subseteq G(K)$ a finitely generated Zariski dense subgroup and prove that there exists a finite index coset that is disjoint from $Z$. As in our case, a ramification condition is necessary. Schinzel \cite{Schinzel} proved it for the additive group $\mathbb G_a$, Zannier \cite{zannier2010hilbert} for a torus or a power of a non-CM elliptic curve.  Corvaja, Demeio, Javanpeykar, Lombardo, and  Zannier  \cite{corvaja_zannier_demeio} generalized the latter to any abelian variety, and Liu \cite{FeiLiu} and Corvaja \cite{corvaja2007rational} generalized it to any connected linear algebraic group.  



\subsection{Characteristic polynomial}
\label{sub_intro_char_poly}
We are still under Notation List~\ref{notation_1}. The typical Galois group of the characteristic polynomial of $\omega_n$ is well studied in the literature, see \cite{rivin2008walks,kowalski_jouve_zywina,lubotzky_rosenzweig}. 

Our objective in this subsection is to show how these type of results follow from the general result on generic Galois groups (Corollary~\ref{c_main_new}). The advantage of our approach is that we do not use anything on the characteristic polynomial, except for the computation of the generic Galois group by Prasad and Rapinchuk. The original proofs  use  more specific information, see for instance \cite[\S~4 and Proposition 4.1]{kowalski_jouve_zywina}. 

Fix an embedding $\rho\colon G \hookrightarrow \GL_N$. Then, we have the generic  characteristic polynomial $\chi_{G,\rho}  \in K(G)[X]$ such that $\chi_{G,\rho}|_{g} = \det (X-g^{\rho})$ for every $g\in G(K)$. Let $V$ be the normalization of $G$ in the splitting field $F$ of $\chi_{G,\rho}$ over $K(G)$, and let $\pi\colon V\to G$ be the corresponding finite morphism, which is generically Galois.

It turns out that $F$ is independent of $\rho$. It follows from a result of Prasad and Rapinchuk \cite{prasad2003existence} that $\Gal(F/K(G))$ is isomorphic to a certain extension of the Weyl group $W(G)$ of $G$, that we denote by $\Pi(G)$. For instance, if $G$ is split, then $\Pi(G)=W(G)$. See \S~\ref{sub_characteristic_polynomial} for the precise description of $\Pi(G)$.

It is easy to see that, in this problem, one can replace $G$ by $G/\urad(G)$. We will deduce from Corollary~\ref{c_main_new} the following:



\begin{cor}
\label{c_main_char_pol}
Assume that  $G/\urad(G)$ is either trivial or semisimple, and put $\chi_n = \det(X-\omega_n^\rho)$.
Then, there exists $C>0$ such that 
\[
    \P(\Gal(\chi_n/K) \not\cong \Pi(G))\leq e^{-n/C}.
\] 
\end{cor}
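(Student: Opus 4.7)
The plan is to deduce Corollary~\ref{c_main_char_pol} from Corollary~\ref{c_main_new} applied to the cover $\pi\colon V\to G$ of \S~\ref{sub_intro_char_poly}, which is generically Galois with group $\Pi(G)$ by Prasad--Rapinchuk.

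First, I would reduce to the case where $G$ is semisimple. Since $\urad(G)$ acts unipotently on $\rho$, the composition factors of $\rho$ as a $G$-representation are trivial on $\urad(G)$ and descend to $G/\urad(G)$; and $\chi_g^{\rho}$ equals the product of the characteristic polynomials on the graded pieces. Hence $\chi_g^{\rho}$ depends only on the image of $g$ in $G/\urad(G)$. The random walk $\omega_n$ projects to a random walk on the finitely generated, Zariski dense image of $\Gamma$ in $(G/\urad(G))(K)$ with the same characteristic polynomials, so we may replace $G$ by the semisimple quotient (the trivial case being vacuous).

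The main step---and where I expect the principal obstacle to lie---is verifying the hypothesis of Corollary~\ref{c_main_new}: every nontrivial geometrically irreducible subcover of $\pi$ is ramified. I would approach this by analysing the Weyl-type structure of $\pi$. The ramification of $\pi$ is concentrated on the discriminant divisor $D\subset G$ where $\chi_{G,\rho}$ has a repeated root, and its inertia structure mirrors that of the Weyl cover $T\to T/W$ of a maximal torus $T\subset G$. A geometrically irreducible subcover corresponds to a subgroup $H\leq \Pi(G)$ with $H\cdot W(G) = \Pi(G)$, and such a subcover is unramified iff $H$ contains every inertia subgroup at codimension-one points above $D$. Using that these inertia subgroups are generated by the reflections of $W(G)$ (one per Weyl reflection hyperplane), and that the reflections generate $W(G)$, one concludes $H\supseteq W(G)$ and hence $H=\Pi(G)$, so the subcover is trivial. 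The geometric bookkeeping establishing this inertia description is the delicate part of the argument.

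Granting the hypothesis, Corollary~\ref{c_main_new} yields $\P(\mathcal{G}_{\omega_n}\not\cong \Pi(G))\leq e^{-n/C}$. To conclude, I would identify $\mathcal{G}_{\omega_n}$ with $\Gal(\chi_n/K)$ off of $D$: if $g\notin D(K)$, then $\chi_g$ is separable, so the residue field at any point of $V$ above $g$ is exactly the splitting field of $\chi_g$ over $K$, and the two Galois groups coincide. The event $\{\omega_n\in D(K)\}$ is contained in a thin set ($D(K)$, with $D$ a proper closed subset of $G$) and so occurs with probability at most $e^{-n/C'}$ by Theorem~\ref{t_main_new} (using only the closed-subset term of \eqref{eq_thin_sets}). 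Combining the two bounds gives the stated estimate.
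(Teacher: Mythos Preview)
Your reduction to semisimple $G$ and your final identification of $\mathcal G_{\omega_n}$ with $\Gal(\chi_n/K)$ off the discriminant divisor are both fine and match the paper. The substantive difference is in how you handle the ramification hypothesis of Corollary~\ref{c_main_new}.

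You propose to verify it directly for arbitrary semisimple $G$: geometrically irreducible subcovers correspond to $H\leq \Pi(G)$ with $H\cdot W(G)=\Pi(G)$, unramified ones must contain all codimension-one inertia, these inertia groups are generated by the reflections of $W(G)$, hence $H\supseteq W(G)$ and so $H=\Pi(G)$. This line of reasoning is sound, and the group-theoretic steps are correct. The one step you flag as delicate---that the inertia groups of the Weyl cover of $G$ are exactly the reflection subgroups---is indeed the crux, and it is not supplied by anything in the paper. It is true, and can be extracted from the Grothendieck--Springer picture (the generic point of each component of the discriminant corresponds to the collision of two eigenvalues along a single root $\alpha$, with monodromy $s_\alpha$), but making this precise for non-simply-connected $G$, where Steinberg's smoothness results are less clean, takes some care.

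The paper avoids this computation entirely. It first observes (Remark~\ref{r_no_assumptions}) that for simply connected $G$ the ramification hypothesis in Corollary~\ref{c_main_new} is vacuous, so the simply connected case is immediate. For non-simply-connected semisimple $G$ it does \emph{not} apply Corollary~\ref{c_main_new} to $G$; instead it pulls the random walk back to the universal cover $\wt G_E$ and checks, by a short torus argument, that if $g$ has non-generic $\Gal(K_g/K)$ then any preimage $\wt g$ has non-generic $\Gal(E_{\wt g}/E)$---reducing to the simply connected case already established. This reuses the pullback machinery of \S\ref{sec_pullback_argument} and requires no analysis of the branch locus of the Weyl cover. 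Your route is more self-contained and conceptually appealing (it shows that the hypothesis of Corollary~\ref{c_main_new} is genuinely satisfied, not merely bypassed), at the cost of importing a piece of geometry the paper does not develop.
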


A more general result  was proved by Lubotzky and Rosenzweig \cite{lubotzky_rosenzweig}, who also considered non-connected groups.


In the case of arithmetic subgroups of
simple simply connected groups over a number field, Gorodnik and Nevo \cite{gorodnik_nevo_char_polynomial} considered the typical characteristic polynomial of elements by counting in archimedean balls, instead of using random walks. This is a substantially different problem. 
We remark that the methods in \cite{gorodnik_nevo_char_polynomial} can be used to deal with thin sets in the archimedean model.

\subsection{Fixed points}
We are still under Notation list~\ref{notation_1}. In this subsection, we deal with actions of $G$ on varieties $X$. We show that typically $\omega_n$ has no rational fixed points, unless generically every element has a fixed point.  Our result  is the random walk analogue of a theorem of Corvaja \cite{corvaja2007rational}. 

Let us be more precise. Assume that $G$ acts, as a group scheme, on a $K$-variety $X$. We can then consider the variety of fixed points  
\[
    V=\{(x,g) \mid xg=g\}\subseteq X\times G.
\]
It is equipped with the natural projection morphism $\pi\colon V \to G$. Let $\thickbar K$ be an algebraic closure of $K$. In order to apply Theorem~\ref{t_main_new}, we need the existence of a Zariski dense subset of $G(\thickbar K)$ consisting of elements fixing finitely many points on $X(\thickbar K)$. For technical reasons, we will also require that, letting $p\colon \wt G \to G$ be the universal cover of $G$ (which exists since $G/\urad(G)$ is semisimple), we have that $p^{-1}(\Gm)\leq \wt G(K)$. In applications, this extra assumption can be obtained by replacing $K$ by a finite extension.
We write $\theta\colon U\dashrightarrow W$ for  rational maps, i.e., defined on an open dense $U_{0}\subseteq U$. 

\begin{cor}
\label{c_main_fixed_points}
Assume that, with notation as above, $p^{-1}(\Gm)\leq \wt G(K)$, and assume that there exists a Zariski dense subset of $G(\thickbar K)$ consisting of elements fixing finitely many points on $X(\thickbar K)$. Then one of the following holds: 
\begin{enumerate}
    \item[(i)] There exists a  rational map $\theta\colon G \dashrightarrow X$ such that
     $\theta(g)g = \theta(g)$ for every $g$ in the domain of $\theta$. Moreover, if $X$ is projective, every element of $G(K)$ fixes a point of $X(K)$.
    \item[(ii)] There exists $C>0$ such that 
    \[
        \P(\omega_n \mbox{ has a fixed point in $X(K)$})\leq \Xi(C,n).
    \]
\end{enumerate}
\end{cor}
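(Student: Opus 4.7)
My plan is to apply Theorem~\ref{t_main_new} to the projection $\pi\colon V\to G$ after decomposing $V$. Write $V=\bigcup V_i$ as a union of $K$-irreducible components and set $\pi_i=\pi|_{V_i}$; the components not dominating $G$ have common image contained in a proper closed $C\subsetneq G$. The hypothesis that a Zariski dense set of elements of $G(\bar K)$ have finite fixed-point sets on $X(\bar K)$ forces every dominant $V_i$ to have dimension $\dim G$, so each dominant $\pi_i$ is generically finite. Since the event ``$\omega_n$ has a fixed point in $X(K)$'' is contained in $\{\omega_n\in C(K)\cup\bigcup_i\pi_i(V_i(K))\}$, it suffices either to produce the rational map $\theta$ of~(i) or to bound the probability that $\omega_n$ lies in this union.

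For the dominant components I would distinguish the possible types of $\pi_i$. If some $\pi_i$ is birational, its rational inverse gives a section $G\dashrightarrow V_i\subseteq X\times G$ whose composition with projection to $X$ is a rational map $\theta\colon G\dashrightarrow X$ with $\theta(g)g=\theta(g)$, establishing~(i). When $X$ is projective, $V_i\to G$ is proper, so the valuative criterion of properness together with smoothness of $G$ extends $\theta$ outside a codimension-two locus; a standard argument with $K$-points of the graph then yields the ``moreover'' clause.

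If every dominant $\pi_i$ has degree $\geq 2$ and all are ramified, then after normalizing each $V_i$ (using Lemma~\ref{l_pullback} to preserve the setup of the main theorem) the set $C\cup\bigcup_i\pi_i(V_i(K))$ has the form~\eqref{eq_thin_sets} required by Theorem~\ref{t_main_new}. Applying that theorem with the displayed thin set yields case~(ii).

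The delicate case, and the main obstacle, is a dominant $\pi_i$ that is \'etale of degree $\geq 2$, which falls outside the direct scope of Theorem~\ref{t_main_new}: here the hypothesis $p^{-1}(\Gm)\leq\wt G(K)$ is essential. Since $G/\urad(G)$ is semisimple, $\wt G$ is geometrically simply connected, so $\wt V_i:=V_i\times_G\wt G\to\wt G$ is geometrically trivial and, as a $K$-scheme, splits as a disjoint union of components of the form $\wt G\times_K\Spec L_{i,j}$ for finite separable extensions $L_{i,j}/K$. Lifting $\omega_n$ to $\wt\omega_n\in\wt G(K)$ identifies the $K$-rational fibre of $V_i\to G$ over $\omega_n$ with that of $\wt V_i\to\wt G$ over $\wt\omega_n$: if every $L_{i,j}\neq K$ the fibre is empty and $V_i$ makes no contribution; if some $L_{i,j}=K$ one gets a $K$-rational section $s\colon\wt G\to\wt V_i$ and hence a $K$-morphism $\wt G\to V_i\to X$ assigning to each $\wt g$ a $K$-rational fixed point of $p(\wt g)$. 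The hardest step is then to show, exploiting the intrinsic fixed-point structure of $V\subseteq X\times G$ (as opposed to an abstract \'etale cover, where the analogous descent fails, cf.\ Remark~\ref{r_necessary_ramified}), that this datum in fact descends to a rational map $\theta\colon G\dashrightarrow X$ with $\theta(g)g=\theta(g)$, again placing us in case~(i). I expect controlling this descent from $\wt G$ to $G$, and ruling out that a genuinely non-descending section yields a counterexample, to be the most delicate aspect of the argument.
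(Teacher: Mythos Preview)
Your overall architecture is right, but the route is more tangled than necessary and leaves the key step unfinished. The paper does not split into the three cases birational/ramified/\'etale; instead it feeds each irreducible component $W$ of $V$ directly into Lemma~\ref{l_pullback} (together with Lemma~\ref{l_proper_closed} for the non-dominant ones). That lemma already contains, in a single stroke, your case analysis: either the probability bound holds, or some component of the pullback $W\times_{G}\wt G$ maps to $\wt G$ with degree one, producing a rational map $\theta'\colon\wt G\dashrightarrow V$ with $\pi\circ\theta'=p$. This avoids having to normalize, to make the maps finite, or to treat ramified and \'etale covers separately.

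The genuine gap in your proposal is exactly the one you flag: descending the section from $\wt G$ to $G$. You correctly sense that an abstract \'etale cover would not allow this (cf.\ Remark~\ref{r_necessary_ramified}), so one must use the fixed-point structure of $V\subseteq X\times G$. The paper does not argue this from scratch; it invokes \cite[Proposition~5.4]{corvaja2007rational}, which shows precisely that a rational section $\theta'\colon\wt G\dashrightarrow V$ of $\pi$ over $p$ is constant on the fibres of $p$, hence factors through a rational map $\theta''\colon G\dashrightarrow V$ with $\pi\circ\theta''=\mathrm{id}$. Composing with the projection to $X$ gives the desired $\theta$. For the ``moreover'' clause when $X$ is projective, your codimension-two extension does not quite finish the job (the bad locus can still contain $K$-points); the paper instead passes a smooth $K$-curve $C$ through the given $g\in G(K)$ meeting the domain of $\theta$, and extends $\theta|_C$ to a morphism $C\to X$ by properness, which immediately gives a $K$-rational fixed point for $g$.
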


We have, therefore, a dichotomy: Either generically every element of $G$ fixes a point on $X$, or almost surely a long random walk will hit elements with no rational fixed points. 

To have an example, 
one may take $X$ to be a flag variety (Corollary~\ref{c_flags}). For more  examples, we refer the reader to \cite{corvaja2007rational}.

\subsection{Function fields} \label{sub_intro_function_fields} 
The goal of this subsection is to extend our results to  global fields of positive characteristic. The approach we take over number fields works for general global fields. However, the proof of Theorem~\ref{t_main_new} is based on the ``strong approximation theorem'', which is more delicate in positive characteristic (see, e.g., \cite[pp. 416--417]{lubotzky_segal}). To overcome this obstacle, we add assumptions on $G$ and $\Gm$, in such a way that  the setting remains quite general.

We start by modifying Notation list~\ref{notation_1}: $K$ denotes any global function field. We assume that  
$G$ is  simply connected and semisimple. (See Remark~\ref{rem_function_general} for a more general setting to which our results apply.) In this case, $G=\prod_{i=1}^t G_i$, where $G_i$ is simply connected and $K$-simple for each $i=1,\ldots ,t$. 
 
We consider more specific $\Gamma$. 
Let $S$ be a finite set of places of $K$, let $\ca O_S$ be the ring of $S$-integers of $K$ and, for $\nu\in S$, let $K_\nu$ be the completion of $K$ with respect to $\nu$. Assume that $\prod_{\nu \in S}G_i(K_\nu)$ is non-compact for every $i=1\ldots, t$; in particular, $S$ is non-empty. We then take 
\[
    \Gm=G(\ca O_S)
\]
to be the corresponding $S$-arithmetic subgroup, and we assume that $\Gm$ is finitely generated. 
To have a concrete example in mind, we note that for the group $G=\SL_N$ with $N\geq 3$, all these assumptions are satisfied for every non-empty $S$ and for every $K$. 


\begin{thm}
\label{t_main_new_function_fields}
Let $Z$ be a thin set of $G(K)$. Then, there exists $C>0$ such that
 \[
\P(\omega_n \in Z) \leq C n^{-1/(10\, \dim \, G)}.
\]
\end{thm}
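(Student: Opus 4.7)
The plan is to follow the strategy behind the ``otherwise'' branch of Theorem~\ref{t_main_new} in the non-semisimple case, using the polynomial rate afforded by a second-moment argument over many reductions modulo primes. First I would perform the usual reductions on $Z$: splitting into finitely many covers, passing to normalizations via Lemma~\ref{l_pullback}, and taking a Galois closure, so as to reduce to the case $Z=\pi(V(K))$ for a single finite generically Galois cover $\pi\colon V\to G$ of group $\ca G$, with $V$ normal and irreducible. Because $G$ is simply connected and semisimple, its \'etale fundamental group over $\bar K$ is trivial, so any such cover of degree $\geq 2$ is automatically ramified; thus the ramification hypothesis of Theorem~\ref{t_main_new} is free in the present setting.

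Next I would analyze the reduction modulo primes. For each prime $\mathfrak p$ of $\ca O_S$ of good reduction for $\pi$, the reduction $\pi_\mathfrak p\colon \ca V_\mathfrak p\to \ca G_\mathfrak p$ is a finite cover over the residue field $k_\mathfrak p$ of size $q_\mathfrak p$, and a Chebotarev--Lang--Weil estimate gives
\[
   \frac{\#\pi_\mathfrak p(\ca V_\mathfrak p(k_\mathfrak p))}{\#\ca G_\mathfrak p(k_\mathfrak p)} \leq 1-\frac{1}{|\ca G|}+O(q_\mathfrak p^{-1/2}),
\]
so $Z\bmod\mathfrak p$ has density uniformly bounded away from $1$. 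Strong approximation for $G$ with respect to $S$ -- valid under our standing hypotheses that $G$ is simply connected, each $K$-simple factor satisfies $\prod_{\nu\in S}G_i(K_\nu)$ non-compact -- combined with super-strong approximation (uniform expansion of the Cayley graphs on the congruence quotients of $\Gm$, in the spirit of Salehi Golsefidy--Varj\'u and Breuillard--Green--Tao in the function field setting) guarantees that the image of $\Gm$ in $\prod_{i=1}^k G(\ca O_S/\mathfrak p_i)$ is of index bounded independently of the $\mathfrak p_i$ (outside a finite exceptional set), and that $\omega_n$ equidistributes on this finite group once $n\gg \log\prod q_{\mathfrak p_i}$.

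Finally I would run a Chebyshev-type second-moment argument. Fix a threshold $T=T(n)$ and let $\ca P_T$ be the set of good primes of norm in $[T,2T]$, so that $|\ca P_T|\asymp T/\log T$ by the function field prime number theorem. For $\mathfrak p\in \ca P_T$, set $Y_\mathfrak p = \mathbf{1}[\omega_n \bmod \mathfrak p \notin \pi_\mathfrak p(\ca V_\mathfrak p(k_\mathfrak p))]$; by the previous paragraph, $\E[Y_\mathfrak p]\geq c>0$ and $\mathrm{Cov}(Y_\mathfrak p,Y_{\mathfrak p'})=o(1)$ uniformly for $\mathfrak p\neq \mathfrak p'$ in $\ca P_T$, provided $n$ exceeds the joint mixing time modulo $\mathfrak p\mathfrak p'$. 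Since $\{\omega_n\in Z\}\subseteq\{Y_\mathfrak p=0 \text{ for all } \mathfrak p\in \ca P_T\}$, Chebyshev's inequality gives
\[
  \P(\omega_n\in Z)\leq \P\Bigl(\textstyle\sum_{\mathfrak p\in\ca P_T} Y_\mathfrak p=0\Bigr)\ll \frac{1}{|\ca P_T|}\ll \frac{\log T}{T}.
\]
Optimizing $T$ against the available quantitative form of mixing in positive characteristic -- which only permits $q_{\mathfrak p}q_{\mathfrak p'}\leq n^{O(1)}$ -- one is forced to take $T\asymp n^{1/(10\dim G)}$, yielding the stated bound.

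The hard step is the quantitative strong approximation in positive characteristic: one must avoid the residual characteristic, discard a finite exceptional set of primes, and obtain joint equidistribution of $\omega_n$ modulo pairs of primes with a spectral gap uniform enough that the variance term is genuinely $o(1)$. It is precisely this weaker mixing, compared with the single-prime spectral gap used in the number field case, that downgrades exponential decay to the polynomial rate $n^{-1/(10\dim G)}$.
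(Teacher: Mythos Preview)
Your overall strategy matches the paper's: verify that $\Gm$ surjects onto $G(k(\mathfrak p))\times G(k(\mathfrak q))$ for large primes, then feed this into the polynomial group sieve (Theorem~\ref{t_general}\eqref{i_polynomial}, via Lemma~\ref{lemma_sieve_polynomial}), which combines the trivial Diaconis--Saloff-Coste spectral gap with the Lang--Weil/Serre bound on $|\pi(V(k(\mathfrak p)))|$. The Chebyshev second-moment step you sketch is precisely Lemma~\ref{pairwise_independence}, and the choice $T\asymp n^{1/(10\,\dim G)}$ is exactly the optimization in Lemma~\ref{lemma_sieve_polynomial}.

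That said, several points in your write-up are off. The reductions via Lemma~\ref{l_pullback} and Galois closure are irrelevant here: Lemma~\ref{l_pullback} is the pullback-to-universal-cover device for \emph{non}-simply-connected $G$, and Theorem~\ref{t_general} handles an arbitrary thin set directly without passing to a Galois cover. More importantly, your appeal to super-strong approximation ``in the spirit of Salehi Golsefidy--Varj\'u and Breuillard--Green--Tao in the function field setting'' is misplaced: those are characteristic-zero theorems, and in positive characteristic uniform expansion of congruence quotients of $G(\ca O_S)$ is known only for special $S$ (Remark~\ref{r_lubotzky_zuk}). The paper uses \emph{no} expansion whatsoever for the polynomial bound---only the trivial gap $1-1/(|A||Q|^2)$ of Lemma~\ref{l_adjacency operator}---and this is exactly why the rate is polynomial. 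So your diagnosis that the downgrade from exponential to polynomial stems from some ``weaker mixing'' intrinsic to positive characteristic, or from a ``single-prime'' versus ``pair-of-primes'' distinction, is incorrect: the downgrade is simply that uniform expansion is not unconditionally available, so one falls back on the trivial gap (the number-field exponential bound also uses pairs of primes). Finally, Prasad's strong approximation gives honest surjectivity onto the product, not merely bounded index; this is the CRT condition the sieve actually requires.
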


We extend  Corollary~\ref{c_main_new} to function fields.
\begin{cor}
\label{c_function_fields_galois}
    Let $V$ be a normal irreducible $K$-variety, and 
     $\pi\colon V\to G$ be a finite surjective morphism,
   which is generically Galois with group $\mathcal{G}$. Then, there exists $C>0$ such that 
\[
    \P(\mathcal{G}_{\omega_n} \neq \mathcal{G})\leq C n^{-1/(10\, \dim \, G)}.
\]
\end{cor}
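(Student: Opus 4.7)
The plan is to reduce the claim to Theorem~\ref{t_main_new_function_fields} by exhibiting a thin subset $Z\subseteq G(K)$ that contains $\{g\in G(K) : \mathcal{G}_g \neq \mathcal{G}\}$. A crucial simplification relative to the number field Corollary~\ref{c_main_new} is that Theorem~\ref{t_main_new_function_fields} imposes no ramification hypothesis on the morphisms defining the thin set; in the simply connected semisimple function field setting, the ramification condition becomes unnecessary at this step, which is why no such hypothesis appears in the statement.

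First, let $H_1,\ldots,H_r$ be the (finite) list of maximal proper subgroups of $\mathcal{G}$, and form the quotients $V_i := V/H_i$. Since $V$ is normal and irreducible and $H_i$ acts on $V$ by $K$-automorphisms coming from the generic Galois action, $V_i$ is a normal irreducible $K$-variety with function field $K(V)^{H_i}$ and $\dim V_i = \dim V = \dim G$. The induced morphism $\pi_i\colon V_i \to G$ is finite, dominant, and separable (as $K(V_i)\subseteq K(V)$ is a subextension of the separable extension $K(V)/K(G)$), of degree $[\mathcal{G}:H_i]\geq 2$. Letting $C\subseteq G$ be the (proper, closed) branch locus of $\pi$, set
\[
    Z := C(K) \cup \bigcup_{i=1}^r \pi_i(V_i(K)).
\]
By construction, $Z$ is a thin subset of $G(K)$ in the sense of \eqref{eq_thin_sets}.

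Next, I would establish the containment $\{g\in G(K): \mathcal{G}_g\neq \mathcal{G}\}\subseteq Z$. Suppose $g\in G(K)\setminus C(K)$ is unramified for $\pi$ with $\mathcal{G}_g \neq \mathcal{G}$. Then $\mathcal{G}_g$ lies inside some maximal proper subgroup $H_i$. Since $g$ is unramified, the geometric fiber $\pi^{-1}(g)(\bar K)$ is a $\mathcal{G}$-torsor on which $\mathrm{Gal}(\bar K/K)$ acts through $\mathcal{G}_g\subseteq H_i$. This action therefore preserves each right $H_i$-coset, so every $H_i$-orbit in the fiber is Galois-stable and descends to a $K$-rational point of $V_i = V/H_i$ above $g$. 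Consequently $g\in \pi_i(V_i(K))\subseteq Z$, as required.

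Finally, applying Theorem~\ref{t_main_new_function_fields} to the thin set $Z$ yields a constant $C>0$ with
\[
    \P(\mathcal{G}_{\omega_n}\neq \mathcal{G}) \;\leq\; \P(\omega_n \in Z) \;\leq\; C\,n^{-1/(10\,\dim G)},
\]
which is the desired estimate. The only point requiring minor care is verifying that the pieces $(V_i,\pi_i)$ meet the formal conditions in \eqref{eq_thin_sets} (normality and irreducibility of $V_i$, separability and degree of $\pi_i$); these are immediate from the fact that $\pi$ is a generically Galois cover of a normal variety by a normal variety. No ramification check is needed, precisely because Theorem~\ref{t_main_new_function_fields} applies to arbitrary thin sets in the simply connected semisimple function field setting.
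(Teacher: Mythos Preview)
Your proof is correct and follows essentially the same approach as the paper: reduce to Theorem~\ref{t_main_new_function_fields} by forming the thin set from the intermediate covers $V/H_i$ over maximal subgroups $H_i\leq\mathcal{G}$, together with the branch locus. The paper handles the branch locus separately via Lemma~\ref{l_proper_closed} and cites \cite[Proposition 3.3.1]{serre_topics_galois} for the containment $\{\mathcal{G}_g\neq\mathcal{G}\}\subseteq Z$, whereas you fold the branch locus into $Z$ and spell out the torsor argument directly; both are the same idea.
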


We also obtain a result on characteristic polynomials over function fields. 

\begin{cor}
\label{c_main_function_fields_2}
Fix an embedding $\rho\colon G\hookrightarrow \GL_N$. Let $\Pi(G)$ be the extension of the Weyl group of $G$, as explained in \S~\ref{sub_intro_char_poly}, and put $\chi_n = \det(X-\omega_n^\rho)$. Then, there exists $C>0$ such that  
\[
    \P( \chi_n \mbox{ has an inseparable irreducible factor or }
    \Gal(\chi_n/K)\not\cong\Pi(G)) \leq C n^{-1/(10\, \dim \, G)}.
\]
\end{cor}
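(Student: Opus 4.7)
My plan is to deduce the corollary from Corollary~\ref{c_function_fields_galois}, applied to the generically Galois cover $\pi\colon V \to G$ coming from the characteristic polynomial, constructed exactly as in \S~\ref{sub_intro_char_poly}. That is, $V$ is the normalization of $G$ in the splitting field $F$ of $\chi_{G,\rho}$ over $K(G)$, so $V$ is normal and irreducible; by Prasad--Rapinchuk, $\pi$ is generically Galois with group $\Pi(G)$. Since the generic element of $G$ is regular semisimple, $\chi_{G,\rho}$ is separable as a polynomial over $K(G)$, and therefore $F/K(G)$ is separable --- this is what makes ``generically Galois'' meaningful in positive characteristic.

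Applying Corollary~\ref{c_function_fields_galois} to this $\pi$ directly gives
\[
    \P(\mathcal{G}_{\omega_n} \neq \Pi(G)) \leq C n^{-1/(10\dim G)}
\]
for some $C > 0$. The corollary will therefore follow once we establish the event inclusion
\[
    \bigl\{ \chi_n \text{ has an inseparable irreducible factor, or } \Gal(\chi_n/K) \not\cong \Pi(G) \bigr\} \subseteq \{\mathcal{G}_{\omega_n} \neq \Pi(G)\}.
\]

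To establish this I would argue the contrapositive. Assume $\mathcal{G}_{\omega_n} = \Pi(G)$, and let $D, I \leq \Pi(G)$ be the decomposition and inertia groups at some $x \in V$ above $\omega_n$. Since $|D/I| = |\Pi(G)|$, necessarily $D = \Pi(G)$ and $I = 1$, so $\pi$ is étale at $x$. Consequently $L := k(x)$ is a separable Galois extension of $K$ with group $\Pi(G)$. The roots $\lambda_1, \ldots, \lambda_N$ of $\chi_{G,\rho}$, viewed as elements of $F$, generate $F$ over $K(G)$ and specialize at $x$ to the roots of $\chi_n$; so these roots lie in $L$ and generate it as a $K$-algebra. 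Hence $L$ is the splitting field of $\chi_n$ over $K$; separability of $L/K$ forces every irreducible factor of $\chi_n$ to be separable, and $\Gal(\chi_n/K) = \Gal(L/K) \cong \Pi(G)$.

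The main novelty compared to the proof of Corollary~\ref{c_main_char_pol} is dealing with the inseparability event in the statement, which has no analogue in characteristic zero and is the one place where one might fear the function-field argument fails. The key observation is that maximality of the specialized Galois group automatically forces $\pi$ to be étale above $\omega_n$, so all separability pathologies are already contained in the exceptional event $\{\mathcal{G}_{\omega_n} \neq \Pi(G)\}$. No new probabilistic ingredient beyond Corollary~\ref{c_function_fields_galois} is required.
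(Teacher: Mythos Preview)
Your overall strategy is exactly the paper's: apply Corollary~\ref{c_function_fields_galois} to the normalization cover $\pi\colon V\to G$ attached to $\chi_{G,\rho}$ (with generic Galois group $\Pi(G)$ by Lemma~\ref{l_generic_galois_group}), and reduce the statement to the bound on $\P(\mathcal G_{\omega_n}\neq \Pi(G))$.

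The difference is in how you pass from the event in the corollary to the event $\{\mathcal G_{\omega_n}\neq \Pi(G)\}$. The paper does \emph{not} prove a global event inclusion. Instead it fixes the open dense sets $U_0$ from $(\star)$ and $U$ from the proof of Lemma~\ref{l_generic_galois_group}; on $U\cap U_0(K)$ one has that $g$ is regular semisimple, $K_g=K_{T_g}$ is the splitting field of a torus (hence separable over $K$, so no inseparable factor of $\chi_g$), and $\mathcal G_g\cong\Gal(K_g/K)$. Thus on $U\cap U_0$ the two events coincide, and the complement is handled by Lemma~\ref{l_proper_closed}.

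Your direct argument for the global inclusion has two soft spots. First, from $D=\Pi(G)$ and $I=1$ you conclude ``$\pi$ is \'etale at $x$''; this needs $[k(x):K]\le|\Pi(G)|$, which follows if $\pi$ is flat at $g$ (e.g.\ if $V$ is Cohen--Macaulay), but $V$ is only known to be normal. Without flatness the fiber length can exceed $|\Pi(G)|$ and the \'etaleness claim is not justified. Second, even granting that $L=k(x)$ is Galois over $K$ with group $\Pi(G)$, you assert that the specialized roots $\lambda_i(x)$ \emph{generate} $L$; but the $\lambda_i$ generate $F$ over $K(G)$, not $\mathcal O_{V,x}$ over $\mathcal O_{G,g}$, and distinct $\lambda_i$ may collide under specialization, so $K(\lambda_i(x))$ could a priori be a proper subfield of $L$. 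Both issues disappear on the \'etale locus of $\pi$ and on $U_0$, which is why the paper's restriction to $U\cap U_0$ (plus Lemma~\ref{l_proper_closed}) is the clean route.
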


We remark that there exists a finite place $\nu$ such that, if $\nu \in S$, then Theorem~\ref{t_main_new_function_fields}, Corollary~\ref{c_function_fields_galois} and Corollary~\ref{c_main_function_fields_2} hold with an exponential bound $e^{-n/C}$. See Remark~\ref{r_lubotzky_zuk}.



\subsection{Acknowledgements}
The authors are grateful to Arno Fehm and Sebastian Petersen for stimulating discussions. They also thank Alex Lubotzky  and Alexei Entin for an explanation on strong approximation and on Bertini theorem, respectively. Finally, they thank Pietro Corvaja for pointing out Fei Liu's paper. 

Both authors were partially supported by a grant of the Israel Science Foundation No. 702/19, and the second named author has received funding from the European Research Council (ERC) under the European Union’s Horizon 2020 research and innovation programme (grant agreement No. 850956).

\section{Simply connected groups: Large sieve}
\label{sec_simply_connected}
In this section we will prove Theorem~\ref{t_main_new} in the case where $G$ is simply connected, and we will prove Theorem~\ref{t_main_new_function_fields}. We introduce the following technical notion. 
\begin{definition}
\label{def_complexity}
Let $K$ be a field, $V$ be an irreducible $K$-variety, and $Z$ be a thin set of $V(K)$. We define the \textit{complexity} of $Z$ to be the minimum of 
\[
    \max\{ t, \deg(V_i), \deg(\pi_i), N \mid i=0,\ldots, t\},
\]
running over all $t\geq 0$, $N\geq 1$, subvarieties $V_0, \ldots, V_t\subseteq \mathbb{P}_K^{N}$, and morphisms $\pi_i\colon V_i\to V$ subject to the following conditions: $\dim(V_0)<\dim(V)$; for every $i\geq 1$, $V_i$ is irreducible, $\dim(V_i) = \dim(V)$, $\pi_i$ is dominant separable with $\deg(\pi_i) \geq 2$; $Z\subseteq \bigcup_{i=0}^t \pi_i(V_i(K))$. 

\end{definition}

Let $K$ be a global field and let $G$ be a smooth connected linear algebraic $K$-group (any further requirement will be made explicit). Let $\Gm$ be a finitely generated Zariski dense subgroup of $G(K)$, and $A$ be a finite symmetric generating multiset of $\Gm$, with $1\in A$.

If $\p$ is a nonzero prime of $\ca O_K$ (the ring of integers of $K$) of sufficiently large norm, the equations defining $G$ can be reduced modulo $\p$, to give a smooth connected linear algebraic group over $K(\p):=\ca O_K/\p$, that we still denote by $G$. Since $\Gm$ is finitely generated, if $\p$ is sufficiently large we get a well defined map $\Gm \to G(K(\p))$.

If $Q$ is a finite quotient of $\Gm$, we will consider the Cayley graph of $Q$ with respect to the multiset induced by $A$. We will say that $Q$ is a \textit{one-sided $\eps$-expander with respect to $A$} if the second eigenvalue of the normalized adjacency operator of the corresponding Cayley graph is at most $1-\eps$. 





\begin{theorem}
\label{t_general}
With notation as above, assume that there exists a set $\mathscr P$ of primes of $K$ of density one such that, for every distinct $\p, \q\in\mathscr P$, the morphism $\Gm \rightarrow G(k(\p))\times G(k(\q))$ is well defined and surjective. Let $Z$ be a thin set of $G(K)$ of complexity $M$.
\begin{enumerate}
\item \label{i_polynomial} There exists $C_1>0$ such that
\[
\P(\omega_n \in Z) \leq C_1 n^{-1/(10\,\dim \, G)},
\]
where $C_1$ depends on $G$, $K$, $|A|$, $M$.
\item \label{i_exponential} Assume furthermore that, for every distinct $\p, \q\in\mathscr P$, the finite groups $G(k(\p))\times G(k(\q))$ are one-sided $\eps$-expanders with respect to $A$. Then, 
there exists $C_2>0$ such that
\[
\P(\omega_n \in Z) \leq e^{-n/C_2},
\]
where $C_2$ depends on $G$, $K$, $|A|$, $M$, $\eps$.
\end{enumerate}
\end{theorem}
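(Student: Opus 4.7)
My plan is to combine three ingredients in a sieve of Lubotzky--Meiri type: (i) a density bound for the reductions $Z_\p := Z \bmod \p$ inside $G(k(\p))$, valid for primes $\p\in\mathscr P$ of sufficiently large norm; (ii) a quantitative mixing estimate for $\omega_n$ on $G(k(\p))$ and, crucially, on $G(k(\p))\times G(k(\q))$ for distinct $\p,\q\in\mathscr P$; and (iii) a second-moment (Chebyshev) inequality assembling the first two. The joint surjectivity hypothesis in $\mathscr P$ is exactly what powers the two-prime mixing in~(ii).

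For (i), I will write $Z \subseteq \pi_0(V_0(K)) \cup \bigcup_{i=1}^{t} \pi_i(V_i(K))$ realizing the complexity $M$. For each degree-$\geq 2$ cover, pass to its Galois closure $\wt\pi_i\colon \wt V_i \to G$ with group $\ca G_i$ acting transitively on fibers of size $d_i \geq 2$. A standard Chebotarev argument (effective Lang--Weil on fibers of $\wt\pi_i$ over $G$) shows that, for every $\p$ of sufficiently large norm with good reduction,
\[
    \frac{|\pi_i(V_i(k(\p)))|}{|G(k(\p))|} = \frac{|\{\gamma \in \ca G_i : \gamma \text{ fixes a point in the fiber}\}|}{|\ca G_i|} + O(q_\p^{-1/2}).
\]
By the Jordan--Cauchy--Frobenius theorem, this proportion is at most $1 - |\ca G_i|^{-1} \leq 1 - (M!)^{-1}$, and the piece $\pi_0(V_0(K))$ contributes only $O(q_\p^{-1})$ by Lang--Weil. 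Hence there exist $\delta=\delta(M)>0$ and $q_0=q_0(M)$ with $|Z_\p| \leq (1-\delta)|G(k(\p))|$ for every $\p\in\mathscr P$ of norm $\geq q_0$.

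For (ii), fix distinct $\p,\q \in \mathscr P$. Joint surjectivity makes $\Cay(G(k(\p))\times G(k(\q)), A)$ a connected Cayley graph on a finite group; in case~(2), one further verifies that the uniform one-sided $\eps$-expansion on each factor is inherited, with a uniform spectral gap, by the product (using the decomposition of $L^2$ into isotypic components $\rho_1\otimes\rho_2$ and handling the case when both $\rho_i$ are nontrivial by a standard mixing argument). Standard random-walk bounds on finite Cayley graphs then give total-variation distance at most $1/4$ from uniform once $n \geq (q_\p q_\q)^{O(\dim G)}$ in case~(1) and once $n \geq C_\eps\dim(G)\log(q_\p q_\q)$ in case~(2).

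For (iii), let $\ca P_N := \{\p\in\mathscr P : N \leq \mathrm{Nm}(\p) \leq 2N\}$ and $X := \sum_{\p\in\ca P_N}\mathbf 1\{\omega_n \bmod \p \notin Z_\p\}$; if $\omega_n\in Z$ then $X=0$. The density bound and one-prime mixing yield $\E[X] \geq (\delta/2)|\ca P_N|$, while two-prime mixing gives pairwise near-independence, so $\var(X) = O(|\ca P_N|)$. Chebyshev then gives $\P(\omega_n\in Z) \leq \P(X=0) \leq \var(X)/\E[X]^2 = O(|\ca P_N|^{-1})$, and the prime ideal theorem in $K$ gives $|\ca P_N| \asymp N/\log N$. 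Choosing $N$ a small power of $n$ (essentially $N = n^{1/(10\dim G)}$) delivers~(1), while in case~(2) one can afford $N = \exp(n/C)$ for $C=C(\eps)$, delivering the exponential bound. The main obstacle I anticipate is making the two-prime mixing estimate quantitative and uniform in $(\p,\q)$, and, in case~(2), confirming that the expansion gap on each factor transfers to $G(k(\p))\times G(k(\q))$ with a constant independent of the pair; the density bound and the sieve itself are routine once these inputs are in place.
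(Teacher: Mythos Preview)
Your strategy is exactly the Lubotzky--Meiri sieve the paper uses, and steps (ii) and (iii) are fine. Step (i), however, has a real gap: the displayed identity
\[
\frac{|\pi_i(V_i(k(\p)))|}{|G(k(\p))|} \;=\; \frac{|\{\gamma \in \ca G_i : \gamma \text{ fixes a point}\}|}{|\ca G_i|} + O(q_\p^{-1/2})
\]
does \emph{not} hold for every large prime $\p$. Write $K_\pi$ for the algebraic closure of $K$ in the Galois closure and $\ca G_i^{\mathrm{geom}}:=\Gal(K(\wt V_i)/K_\pi K(G))\trianglelefteq \ca G_i$. For $g\in G(k(\p))$ the Frobenius lands in the single coset $\sigma_\p\,\ca G_i^{\mathrm{geom}}$ determined by $\mathrm{Frob}_\p\in\Gal(K_\pi/K)$, so the left-hand side equals the fixed-point proportion in that \emph{coset}, not in all of $\ca G_i$. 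A coset can consist entirely of elements with a fixed point: take $\ca G_i=S_3$, $\ca G_i^{\mathrm{geom}}=A_3$, and the degree-$3$ cover corresponding to $H=\langle(12)\rangle$ (so $V_i$ is geometrically irreducible since $H\cdot A_3=S_3$); the nontrivial coset $(12)A_3=\{(12),(13),(23)\}$ has every element fixing a point, so for primes $\p$ inert in $K_\pi$ one gets $|\pi_i(V_i(k(\p)))|=(1+O(q_\p^{-1/2}))|G(k(\p))|$ and your bound $|Z_\p|\le(1-\delta)|G(k(\p))|$ fails outright.

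The fix is precisely what the paper does: restrict the sieving set to primes $\p$ that split completely in $K_\pi$. Then $\sigma_\p$ is trivial, you are counting over $\ca G_i^{\mathrm{geom}}$ itself, and Jordan's lemma gives the fixed-point proportion $\le 1-|\ca G_i^{\mathrm{geom}}|^{-1}\le 1-(M!)^{-1}$ as you wanted. By Chebotarev these primes have density $|K_\pi:K|^{-1}\ge (M!)^{-1}$, so there are still $\gg N/\log N$ of them in $[N,2N]$ and your Chebyshev step goes through unchanged.

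One smaller point: the ``obstacle'' you anticipate in case~(2), transferring expansion from the factors to the product, is a non-issue. The hypothesis of part~(2) already assumes directly that the products $G(k(\p))\times G(k(\q))$ are one-sided $\eps$-expanders with respect to $A$; no transfer argument is required (and in fact expansion for the diagonal image of $A$ does not follow from expansion on the factors separately).
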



\subsection{Group large sieve method}
\label{subsec_large_sieve}

The proof of Theorem~\ref{t_general} is based on the ``group large sieve method'', introduced by Kowalski \cite[Chapter 7]{kowalski_book_large_sieve}; revised and reformulated in a neat way by Lubotzky--Meiri \cite{lubotzky_meiri_powers}.

We first recall the main ideas of this method. 
Let $\Gm$ be a finitely generated group, $A$ be a finite symmetric generating multiset of $\Gm$, with $1\in A$, and $Z$ be a subset of $\Gm$. The group sieve method provides conditions for which a random walk on $\mathrm{Cay}(\Gm,A)$ hits elements of $Z$ with probability tending to zero exponentially fast.

The method is based on looking at finite quotients of $\Gm$. In order to get exponentially fast decay, the crux is that the Cayley graphs induced on these quotients should be uniform expanders, so that the corresponding random walks gets equidistributed in logarithmic time.

If the quotients are not expanders, we do not have equidistribution in logarithmic time, but still, we have equidistribution in polynomial time. This is sufficient in order to show that a random walk on $\mathrm{Cay}(\Gm,A)$ hits elements of $Z$ with probability decaying as $n^{-c}$. We make this precise in Lemma~\ref{lemma_sieve_polynomial}, below, which is the ``polynomial'' analogue of \cite[Theorem B]{lubotzky_meiri_powers}; see also \cite[Lemma 7.5]{Breuillard_superstrong}. We first need a lemma.

\begin{lemma}
\label{l_adjacency operator}
Let $G$ be a finite group, and let $A$ be a finite symmetric generating multiset of $G$, with $1\in A$. Let $\omega_n$ denote the $n$-th step of the uniform random walk on $\Cay(G,A)$, starting at the identity. Then, for every $g\in G$,
\[
\left|\P(\omega_n =g) - \frac{1}{|G|}\right| \leq |G|^{1/2} \cdot \left(1-\frac{1}{|A||G|^2}\right)^n.
\]
\end{lemma}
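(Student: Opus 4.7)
The plan is to use standard spectral theory for the reversible Markov chain defined by $\omega_n$ and reduce everything to a lower bound on the spectral gap of $\Cay(G,A)$.

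Let $P$ denote the transition operator of the walk on $\ell^2(G)$ (counting inner product); symmetry of $A$ makes $P$ self-adjoint, with real spectrum $1=\lambda_0\geq\lambda_1\geq\cdots\geq\lambda_{|G|-1}\geq -1$ and orthonormal eigenbasis $v_0,\ldots,v_{|G|-1}$, where $v_0=|G|^{-1/2}\mathbf{1}$. Decomposing $\delta_e=\sum_i v_i(e)v_i$, the component along $v_0$ contributes exactly $1/|G|$, and
\[
\P(\omega_n=g)-\frac{1}{|G|}=\sum_{i\geq 1}\lambda_i^n v_i(e)v_i(g).
\]
With $\lambda:=\max_{i\geq 1}|\lambda_i|$, the spectral theorem yields $\|P^n\delta_e-|G|^{-1}\mathbf{1}\|_2\leq \lambda^n$, and using the crude chain $\|\cdot\|_\infty\leq\|\cdot\|_1\leq |G|^{1/2}\|\cdot\|_2$ one obtains $|\P(\omega_n=g)-1/|G||\leq |G|^{1/2}\lambda^n$. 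It therefore suffices to prove that $\lambda\leq 1-\frac{1}{|A||G|^2}$.

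For the upper bound on $\lambda_1$, I would invoke the Diaconis--Stroock--Sinclair Poincar\'e inequality via canonical paths. For each $w\in G$ fix a minimal-length word $w=a_1^{(w)}\cdots a_{\ell(w)}^{(w)}$ in $A$ (so $\ell(w)\leq|G|-1$) and take $\gamma_{xy}$ to be the left-translate by $x$ of the canonical path from $e$ to $x^{-1}y$. By vertex-transitivity of $\Cay(G,A)$, the multiplicity $N_a$ of any directed edge $(u,ua)$, $a\in A$, in the family $\{\gamma_{xy}\}$ depends only on $a$. Comparing total path-length $\sum_{x,y}|\gamma_{xy}|=|G|\sum_w\ell(w)\leq|G|^2(|G|-1)$ with the fact that there are $|G|$ directed $a$-edges gives $\sum_a N_a\leq|G|(|G|-1)$, so $\max_a N_a\leq |G|^2$. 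Plugging this into Sinclair's estimate
\[
1-\lambda_1\geq\frac{1}{K},\qquad K=\max_e\frac{1}{\pi(e^-)P(e^-,e^+)}\sum_{\gamma_{xy}\ni e}|\gamma_{xy}|\pi(x)\pi(y),
\]
together with $\pi(u)P(u,ua)\geq 1/(|A||G|)$, $\pi(x)\pi(y)=1/|G|^2$, and $|\gamma_{xy}|\leq|G|-1$, yields $K\leq |A||G|^2$ and hence $\lambda_1\leq 1-\frac{1}{|A||G|^2}$.

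For the smallest eigenvalue, the hypothesis $1\in A$ gives holding probability at least $1/|A|$: writing $P=\tfrac{1}{|A|}I+\bigl(1-\tfrac{1}{|A|}\bigr)P'$ with $P'$ a Markov kernel, each eigenvalue of $P$ is at least $\tfrac{2}{|A|}-1$, so $|\lambda_{|G|-1}|\leq 1-\tfrac{2}{|A|}\leq 1-\tfrac{1}{|A||G|^2}$ whenever $|A|\geq 2$ (the case $|A|=1$ being trivial), completing the bound on $\lambda$. The main obstacle is obtaining the sharp $\tfrac{1}{|A||G|^2}$ gap: a direct application of Cheeger's inequality with the trivial conductance estimate $\Phi\geq 2/(|A||G|)$ yields only $1-\lambda_1\geq 2/(|A|^2|G|^2)$, which is insufficient once $|A|\geq 3$; the equivariant choice of canonical paths, exploiting vertex-transitivity of the Cayley graph, is what recovers the correct factor of $|A|$.
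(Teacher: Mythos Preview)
Your proof is correct and follows essentially the same route as the paper's: the paper simply cites Diaconis--Saloff-Coste, whose Lemma~2 is precisely your spectral-decomposition bound $|G|^{1/2}\lambda^n$, and whose Lemma~1/Corollary~1 is the Poincar\'e (canonical paths) comparison that you carry out explicitly to get $\lambda_1\leq 1-\tfrac{1}{|A||G|^2}$, together with the laziness argument for the bottom eigenvalue. In short, you have unpacked the two citations rather than taken a different path. Two small cosmetic points: the chain $\|\cdot\|_\infty\leq\|\cdot\|_1\leq|G|^{1/2}\|\cdot\|_2$ is more than you need (already $\|\cdot\|_\infty\leq\|\cdot\|_2$), and the sentence ``$|\lambda_{|G|-1}|\leq 1-\tfrac{2}{|A|}$'' is literally correct only when $\lambda_{|G|-1}\leq 0$; of course if $\lambda_{|G|-1}>0$ then $|\lambda_{|G|-1}|\leq\lambda_1$ and your $\lambda_1$ bound already applies, so the conclusion $\lambda\leq 1-\tfrac{1}{|A||G|^2}$ stands.
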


\begin{proof}
Let $1=\pi_0 \geq \pi_1 \geq \cdots \geq \pi_{\ell-1}\geq -1$ be the eigenvalues of the normalized adjacency operator associated to $\Cay(G,A)$. Set $\pi_*:=\max\{\pi_1, |\pi_{\ell-1}|\}$. By \cite[Lemma 2]{diaconis_saloff_comparison}, we have that, for every $g\in G$,
\[
\left|\P(\omega_n =g) - \frac{1}{|G|}\right| \leq |G|^{1/2} \cdot \pi_*^n.
\]
Now, \cite[Lemma~1 and Corollary~1]{diaconis_saloff_comparison} show that
\[
\pi_* \leq 1- \frac{1}{|A||G|^2}.
\]
This concludes the proof.
\end{proof}

We recall another lemma, which is an application of Chebyshev's inequality.

\begin{lemma}
\label{pairwise_independence}
Let $A_i$, $i=1,\ldots, t$ be events in a probability space. Assume that
\begin{enumerate}
    \item $\P(A_i) \leq 1-\beta$ for every $i$.
    \item $\P(A_i\cap A_j) \leq \P(A_i)\P(A_j) + \delta$ for every $i\neq j$.
\end{enumerate}
Then,
\[\P\left(\bigcap_{i=1}^t A_i\right)\leq \frac{1}{\beta^2}\left(\delta + \frac{\beta}{t}\right).
\]
\end{lemma}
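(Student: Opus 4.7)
The statement is a standard second-moment / Chebyshev inequality for ``almost pairwise independent'' events, as already hinted at by the lemma's placement in the large-sieve discussion. My plan is to pass to the complementary events, introduce the indicator sum, and use the hypotheses to control its mean and variance.

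\smallskip

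\noindent\textbf{Setup.} First, I would set $Y_i = 1 - \mathbf{1}_{A_i}$, the indicator of the complement $A_i^c$, and $T = \sum_{i=1}^t Y_i$. Then the quantity we want to bound is
\[
\P\!\left(\bigcap_{i=1}^t A_i\right) \;=\; \P(T = 0).
\]
Since each $Y_i$ is $\{0,1\}$-valued, $\var(Y_i) = \E[Y_i] - \E[Y_i]^2 \leq \E[Y_i]$, and a direct expansion gives
\[
\operatorname{Cov}(Y_i, Y_j) \;=\; \P(A_i^c \cap A_j^c) - \P(A_i^c)\P(A_j^c) \;=\; \P(A_i \cap A_j) - \P(A_i)\P(A_j),
\]
so hypothesis (2) yields $\operatorname{Cov}(Y_i, Y_j) \leq \delta$ for all $i\neq j$.

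\smallskip

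\noindent\textbf{Mean and variance bounds.} From hypothesis (1) we have $\E[Y_i] = 1 - \P(A_i) \geq \beta$, hence
\[
\E[T] \;\geq\; t\beta.
\]
Combining the two observations above,
\[
\var(T) \;=\; \sum_{i=1}^t \var(Y_i) + \sum_{i\neq j} \operatorname{Cov}(Y_i, Y_j) \;\leq\; \E[T] + t(t-1)\delta \;\leq\; \E[T] + t^2\delta.
\]

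\smallskip

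\noindent\textbf{Chebyshev and cleanup.} Since the event $\{T=0\}$ forces $|T - \E[T]| \geq \E[T]$, Chebyshev's inequality gives
\[
\P(T = 0) \;\leq\; \frac{\var(T)}{\E[T]^2} \;\leq\; \frac{1}{\E[T]} + \frac{t^2\delta}{\E[T]^2}.
\]
Plugging in $\E[T]\geq t\beta$ yields
\[
\P(T=0) \;\leq\; \frac{1}{t\beta} + \frac{\delta}{\beta^2} \;=\; \frac{1}{\beta^2}\!\left(\delta + \frac{\beta}{t}\right),
\]
which is the desired bound.

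\smallskip

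This is essentially a routine calculation, so I do not anticipate a serious obstacle; the only mildly delicate point is keeping the bookkeeping clean when bounding $\var(Y_i)$ by $\E[Y_i]$ (rather than the tighter $\E[Y_i](1 - \E[Y_i])$) so that the sum of variances telescopes into $\E[T]$, which is precisely what allows the first error term $1/\E[T]$ to match the form $\beta/(t\beta^2)$ in the stated inequality.
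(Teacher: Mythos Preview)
Your proof is correct and is exactly the Chebyshev/second-moment argument the paper has in mind: the paper itself simply cites \cite[Lemma~3.1]{lubotzky_meiri_powers} and notes that it is ``an application of Chebyshev's inequality,'' which is precisely what you wrote out. Nothing is missing.
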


\begin{proof}
See \cite[Lemma 3.1]{lubotzky_meiri_powers}. (In fact, that lemma is stated with $1/t$ replacing $\beta/t$ in the conclusion, but the proof gives the above, slightly stronger, statement.)
\end{proof}

\begin{lemma} 
\label{lemma_sieve_polynomial}
Let $\Gamma$ be a finitely generated group, let $A$ be a finite symmetric generating multiset of $\Gamma$, with $1\in A$, and let $Z$ be a subset of $\Gm$. Let $N_1, \ldots, N_t$ be finite index normal subgroups of $\Gamma$. For every $i$, denote by $\pi_i:\Gm\rightarrow \Gm/N_i$ the natural projection. Assume that there exist positive constants $C$, $D$ and $0<\alpha <1$ such that the following conditions are satisfied.
\begin{enumerate}
    \item (CRT condition) $\Gm/(N_i\cap N_j) \cong \Gm/{N_i} \times \Gm/{N_j}$ for every $i \neq j$.
    \item (Quotients grow sub-polynomially) $|\pi_i(\Gm)| \leq C\cdot t^D$ for every $i$.
    \item  (Large sieve assumption) $|\pi_i(Z)| \leq (1-\alpha)|\pi_i(\Gm)|$ for every $i$. 
\end{enumerate}
Then, for every $n \geq 10|A|C^5 t^{5D}/\alpha$,
\[
\P(\omega_n \in Z) \leq \frac{3}{\alpha t}.
\]
In particular, if $t\asymp n^{1/(5D)}$, we get $\P(\omega_n \in Z) \ll n^{-1/(5D)}$, where the implied constant depends polynomially on $1/\alpha, |A|, C$.
\end{lemma}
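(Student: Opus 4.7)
The plan is to apply the group large sieve template of Lubotzky--Meiri in the polynomial regime, namely to combine Lemma~\ref{l_adjacency operator} (quantitative equidistribution) with Lemma~\ref{pairwise_independence} (pairwise-independence trick). For each $i=1,\ldots,t$, define the event
\[
    A_i = \{\omega_n \in \pi_i^{-1}(\pi_i(Z))\}.
\]
Since $Z\subseteq \pi_i^{-1}(\pi_i(Z))$ for every $i$, we have $\{\omega_n\in Z\}\subseteq \bigcap_i A_i$, so it suffices to upper bound $\P(\bigcap_i A_i)$ by means of Lemma~\ref{pairwise_independence}.

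First, I would verify the individual upper bound $\P(A_i)\leq 1-\alpha/2$, which will serve as $\beta=\alpha/2$ in Lemma~\ref{pairwise_independence}. Writing $q_i=|\pi_i(\Gm)|$ and applying Lemma~\ref{l_adjacency operator} to the finite group $\Gm/N_i$ with the induced multiset of generators (and using that $\pi_i(\omega_n)$ is the induced random walk), one gets
\[
    \bigl|\P(\pi_i(\omega_n)=g) - 1/q_i\bigr|\leq q_i^{1/2}\bigl(1-1/(|A|q_i^2)\bigr)^n\leq q_i^{1/2}e^{-n/(|A|q_i^2)}
\]
for every $g\in\Gm/N_i$. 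Summing over $g\in\pi_i(Z)$ and combining with assumption (3) yields
\[
    \P(A_i)\leq (1-\alpha)+q_i^{3/2}e^{-n/(|A|q_i^2)}.
\]
Using (2) to bound $q_i\leq Ct^D$, the hypothesis $n\geq 10|A|C^5t^{5D}/\alpha$ comfortably makes the error term at most $\alpha/2$.

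Next, I would establish the pairwise bound with $\delta = \alpha/(4t)$. The key input is (1): the CRT condition identifies $\Gm/(N_i\cap N_j)$ with $\Gm/N_i\times\Gm/N_j$, so
\[
    \P(A_i\cap A_j) = \P\bigl(\pi_{ij}(\omega_n)\in \pi_i(Z)\times\pi_j(Z)\bigr),
\]
where $\pi_{ij}\colon \Gm\to \Gm/(N_i\cap N_j)$ is the quotient map. Applying Lemma~\ref{l_adjacency operator} to $\Gm/(N_i\cap N_j)$, whose order equals $q_iq_j\leq C^2t^{2D}$, gives
\[
    \P(A_i\cap A_j)\leq \frac{|\pi_i(Z)|\cdot|\pi_j(Z)|}{q_iq_j} + (q_iq_j)^{3/2}e^{-n/(|A|(q_iq_j)^2)},
\]
and the first term is precisely the ``main term'' $\P(A_i)\P(A_j)$ up to the equidistribution error already controlled above. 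After absorbing lower-order terms, the hypothesis $n\geq 10|A|C^5t^{5D}/\alpha$ (note the extra factor $t^D$ over the threshold $|A|C^4t^{4D}\log t$ that one naively needs) makes the error $\leq \alpha/(4t)$.

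With $\beta=\alpha/2$ and $\delta=\alpha/(4t)$ verified, Lemma~\ref{pairwise_independence} gives
\[
    \P(\omega_n\in Z)\leq \P\bigl(\bigcap_i A_i\bigr)\leq \frac{4}{\alpha^2}\Bigl(\frac{\alpha}{4t}+\frac{\alpha}{2t}\Bigr)=\frac{3}{\alpha t}.
\]
The main obstacle is purely bookkeeping: one must make sure that the single threshold $n\geq 10|A|C^5t^{5D}/\alpha$ simultaneously dominates the two equidistribution error terms (one on each quotient $\Gm/N_i$ and $\Gm/(N_i\cap N_j)$) by the required amounts ($\alpha/2$ and $\alpha/(4t)$ respectively), and the extra $t^D$ in the exponent, together with the polynomial in $C$ and $1/\alpha$, is precisely what absorbs the logarithmic factors hidden in the estimate $q_i^{3/2}e^{-n/(|A|q_i^2)}\leq \mathrm{small}$.
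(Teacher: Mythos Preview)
Your proposal is correct and follows essentially the same approach as the paper: define the events $A_i=\{\pi_i(\omega_n)\in\pi_i(Z)\}$, use Lemma~\ref{l_adjacency operator} on $\Gm/N_i$ and (via the CRT condition) on $\Gm/(N_i\cap N_j)$ to control $\P(A_i)$ and $\P(A_i\cap A_j)-\P(A_i)\P(A_j)$, then feed $\beta=\alpha/2$ and the resulting $\delta$ into Lemma~\ref{pairwise_independence}. The only cosmetic difference is that the paper keeps the error in the form $(1-1/(|A|q^2))^n$ rather than passing to $e^{-n/(|A|q^2)}$, and packages the comparison of $|\pi_i(Z)||\pi_j(Z)|/(q_iq_j)$ with $\P(A_i)\P(A_j)$ as ``three applications of the triangle inequality'' yielding a single $\delta=3C^3t^{3D}(1-1/(|A|C^4t^{4D}))^n$; your remark that this step costs an already-controlled equidistribution error is exactly that.
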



\begin{proof}
For every $i\in\{1, \ldots, t\}$ and for every $y\in \pi_i(\Gm)$, by Lemma~\ref{l_adjacency operator} and by assumption (2) we have
\begin{align}
\label{eq_diaconis}
    \left|\P(\pi_i(\omega_n)=y) - \frac{1}{|\pi_i(\Gm)|}\right| &\leq |\pi_i(\Gm)|^{1/2} \cdot \left(1-\frac{1}{|A||\pi_i(\Gm)|^2}\right)^n \\
    &\leq C^{1/2}t^{D/2} \cdot \left( 1- \frac{1}{|A|C^2t^{2D}}\right)^n. \nonumber
\end{align}


By the same reasoning and by assumption (1), for every $i \neq j$ and $y_i\in \pi_i(\Gm)$, $y_j\in \pi_j(\Gm)$, we have
\begin{align}
\label{eq_diaconis_2}
    &\left|\P(\pi_i(\omega_n)=y_i, \pi_j(\omega_n)=y_j) - \frac{1}{|\pi_i(\Gm)||\pi_j(\Gm)|}\right| \\
    &\qquad \leq (|\pi_i(\Gm)||\pi_j(\Gm)|)^{1/2} \cdot \left(1-\frac{1}{|A|(|\pi_i(\Gm)||\pi_j(\Gm)|)^2}\right)^n
    \nonumber  \\
    &\qquad  \leq Ct^D \cdot \left( 1- \frac{1}{|A|C^4t^{4D}}\right)^n. \nonumber
\end{align}
Now, let $A_i$ denote the event ``$\pi_i(\omega_n) \in \pi_i(Z)$''. Applying a union bound and three times the triangle inequality, we deduce from \eqref{eq_diaconis} and \eqref{eq_diaconis_2} that
\[
\P(A_i\cap A_j) \leq \P(A_i)\P(A_j) + 3 C^3 t^{3D} \cdot \left( 1- \frac{1}{|A|C^4t^{4D}}\right)^n.
\]
Set now
\begin{align*}
    \delta &:=3 C^3 t^{3D} \cdot \left( 1- \frac{1}{|A|C^4t^{4D}}\right)^n,\\
    \beta &:= \alpha/2.
\end{align*}
For every $n \geq 10|A|C^5 t^{5D}/\alpha$, we have that $4\delta/\alpha^2 \leq 1/\alpha t$.
Applying Lemma~\ref{pairwise_independence} to the events $A_1, \ldots, A_t$, we get that
\[
\P(\omega_n \in Z) \leq \P\left(\bigcap_{i=1}^t A_i\right) \leq \frac{4}{\alpha^2}\left( \delta  + \frac{\alpha}{2t}\right) \leq \frac{3}{\alpha t}.
\]
This concludes the proof.
\end{proof}

We will now prove Theorem~\ref{t_general}.

\begin{proof}[Proof of Theorem~\ref{t_general}]

We will first prove item \eqref{i_polynomial}. Since the constant $C_1$ in the statement depends on the complexity $M$ of the thin set $Z$, we may assume that $Z=C(K)$ where $C$ is a proper closed subvariety of $G$, or $Z=\pi(V(K))$ for some separable dominant morphism $\pi\colon V\to G$, with $V$ quasi-projective, $\dim(V)=\dim(G)$, and $\deg(\pi) \geq 2$. The case where $Z=C(K)$ will be dealt in Lemma~\ref{l_proper_closed}, below, under weaker assumptions. Therefore, we focus here on the case $Z=\pi(V(K))$. We may assume that $V$ is geometrically irreducible, for otherwise $V(K)$ is contained in a proper closed subset of $V$, hence $Z$ is contained in a proper closed subset of $G$ and we are in the previous case.

Let $K_\pi$ be the algebraic closure of $K$ in the Galois closure of $K(V)/K(G)$. 
Let $\mathscr P$ be the set of primes given in the statement of Theorem~\ref{t_general}. Let $\mathscr P'$ be the set of primes $\p$ of $\mathscr P$ having the following property: 

\begin{itemize}
    \item[$\diamond$] $\p$ splits completely in $K_\pi$, and $\text N(\p)$ is sufficiently large, depending on $G$ and $M$, in such a way that 
    $|\pi(V(K(\p)))| \leq (1-\frac{1}{2M!}) \cdot |G(K(\p))|$. 
    (Here, we still use the letters $G$ and $V$ to denote the base change to $K(\p)$ of models over a suitable ring of $S$-integers of $K$.)
\end{itemize}
We can ensure this condition in view of the Lang--Weyl estimate \cite{lang_weyl} and in view of \cite[Theorem 3.6.2]{serre_topics_galois}. The latter is stated over number fields, but the proof works also over function fields if $\pi$ is separable.

We order the primes in $\mathscr P'$, in such a way that the norm of the primes is non-decreasing, and we get a sequence $(\p_i)_{i \in \N}$. Let $n$ be a positive integer; our aim is to bound $\P(\omega_n\in Z)$. We want to apply Lemma~\ref{lemma_sieve_polynomial} to the finite quotients $G(K(\p_i))$, $i=1, \ldots, t$, of $\Gm$, for some $t$ that will be chosen later. We now check that conditions (1), (2) and (3) of that lemma are satisfied. Condition (1) is the main assumption of Theorem~\ref{t_general}, so it is satisfied.




By Chebotarev Density Theorem, the density of primes of $K$ splitting completely in $K_\pi$ is $1/|K_\pi : K| \geq 1/M!$, hence so is the density of primes in $\mathscr P'$. Combining this with Landau Prime Ideal Theorem, 
we get $\text N(\p_i)\ll i\log i\ll i^2$, where the implied constant depends on $G$, $K$, $M$. 
Hence, by the Lang--Weyl estimate, we get $|G(K(\p_i))|\ll  i^{2\,\dim(G)}$. In particular, condition (2) of Lemma~\ref{lemma_sieve_polynomial} is satisfied, with $D=2\cdot\dim(G)$. Finally, condition (3) holds by the definition of $\mathscr P'$. At this point, we can choose any $t\asymp n^{1/(10\,\dim \, G)}$, and Theorem~\ref{t_general}\eqref{i_polynomial} follows from Lemma~\ref{lemma_sieve_polynomial}.

The proof of Theorem~\ref{t_general}\eqref{i_exponential} is similar, but instead of using Lemma~\ref{lemma_sieve_polynomial}, we use \cite[Theorem B]{lubotzky_meiri_powers}. 
\end{proof}



\subsection{Proof of Theorems~\ref{t_main_new} and~\ref{t_main_new_function_fields} for $G$ simply connected}
\label{sub_proof_simply_connected}

\begin{proof}[Proof of Theorem~\ref{t_main_new} for $G$ simply connected]
Let $G$ be a connected simply connected linear algebraic $K$-group, where $K$ is a number field, and let $\Gm$ be a finitely generated Zariski dense subgroup of $G(K)$. Let $\mathscr P$ be the set of nonzero primes of $\ca O_K$ having inertia degree one over $\Q$, and of sufficiently large norm so that $G$ can be reduced modulo $\p$ and we have a map $\Gm \to G(K(\p))$ for every $\p\in \mathscr P$. Note that $\mathscr P$ has density one among all primes. We recall two facts.

(i) The ``strong approximation theorem'' (see Nori \cite[\S~5]{nori_subgroups_gln}) implies that, possibly removing finitely many primes from $\mathscr P$, the morphism $\Gm \to G(K(\p))\times G(K(\q))$ is surjective for every $\p\neq \q \in \mathscr P$. (We refer to \cite[pp. 399--417]{lubotzky_segal} and \cite[Chapter 7]{platonov_rapinchuk} for thorough discussions on strong approximation.)

(ii) Assume, furthermore, that $G$ is semisimple. The ``superstrong approximation theorem'', proved by Salehi Golsefidy and Varj\'u \cite[Corollary 6]{Varju_expansion}, asserts that, possibly removing finitely many primes from $\mathscr P$, the quotients $G(k(\p))\times G(k(\q))$ of $\Gm$ are $\eps$-expanders for every $\p\neq \q \in \mathscr P$ and for some $\eps>0$ independent of $\p$. 

Now Theorem~\ref{t_main_new} for $G$ simply connected follows from (i), (ii), and Theorem~\ref{t_general}.
\end{proof}

\begin{proof}[Proof of Theorem~\ref{t_main_new_function_fields}]
Let $K$ be a global function field, let $G$ be connected and simply connected and let $\Gm=G(\ca O_S)$ be an $S$-arithmetic group, as in \S~\ref{sub_intro_function_fields}. Prasad \cite{prasad_strong_approximation} proved the strong approximation theorem for function fields. (The theorem is stated for simply connected $K$-simple groups, but a simply connected semisimple group is a direct product of such factors, and strong approximation behaves well with respect to direct products; see, e.g., \cite[Proposition 7.1]{platonov_rapinchuk}.) This implies that the assumptions of Theorem~\ref{t_general} are satisfied, with $\mathscr P$ denoting the set of all primes of sufficiently large norm (in terms of $G$ and $\Gm$). Then, Theorem~\ref{t_main_new_function_fields} follows from Theorem~\ref{t_general}\eqref{i_polynomial}.
\end{proof}

Recall that, in the proof of Theorem~\ref{t_general}, we only dealt with thin sets of the form $Z=\pi(V(K))$. We now close the gap, by proving Theorems~\ref{t_main_new} and~\ref{t_main_new_function_fields} in the case $Z=C(K)$ where $C$ is a proper closed subset of $G$. In particular, over number fields we do not require that $G$ is simply connected.
 The case $Z=C(K)$ is easier: It is sufficient to reduce modulo a single prime, rather than modulo pairs of primes, as in Lemma~\ref{lemma_sieve_polynomial}. We decided to postpone the proof here because we need the results on strong and superstrong approximation, which we recalled only in this subsection. 


\begin{lemma}
\label{l_proper_closed}
Theorems~\ref{t_main_new} and~\ref{t_main_new_function_fields} hold when $Z=C(K)$ and $C$ is a proper closed subset of $G$.
\end{lemma}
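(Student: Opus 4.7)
The plan is to handle the closed-set case by reducing modulo a single prime, bypassing the pairwise CRT argument in Lemma~\ref{lemma_sieve_polynomial}. This simplifies the sieve considerably, at the cost of a separate optimization step.

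First, I would produce a density-one set $\mathscr P$ of primes of $\mathcal O_K$ such that, for every $\mathfrak p\in\mathscr P$, the reduction $\pi_{\mathfrak p}\colon\Gm\to G(K(\mathfrak p))$ is defined and $[G(K(\mathfrak p)):\pi_{\mathfrak p}(\Gm)]$ is bounded uniformly in $\mathfrak p$. When $G$ is simply connected this is the strong approximation input recalled in \S\ref{sub_proof_simply_connected}; for general $G$ (needed only in the number field setting), one applies strong approximation to the simply connected cover of $G/\urad(G)$ after possibly extending $K$, and uses that the unipotent radical reduces well modulo almost all primes, so that $\pi_{\mathfrak p}(\Gm)$ still captures $G(K(\mathfrak p))$ up to bounded index.

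Second, since $C$ is a proper closed subset, $\dim C<\dim G$, so the Lang--Weyl estimate gives $|G(K(\mathfrak p))|\asymp \mathrm N(\mathfrak p)^{\dim G}$ and $|C(K(\mathfrak p))|\ll \mathrm N(\mathfrak p)^{\dim G-1}$; combined with the bounded index,
\[
\frac{|\pi_{\mathfrak p}(Z\cap\Gm)|}{|\pi_{\mathfrak p}(\Gm)|}\leq \frac{|C(K(\mathfrak p))|}{|\pi_{\mathfrak p}(\Gm)|}\ll \frac{1}{\mathrm N(\mathfrak p)}.
\]
I would then apply Lemma~\ref{l_adjacency operator} to the projected random walk on $\pi_{\mathfrak p}(\Gm)$ (with generating multiset $\pi_{\mathfrak p}(A)$, of size $|A|$), summing the pointwise bound over $g\in\pi_{\mathfrak p}(Z\cap\Gm)$ and using $\P(\omega_n\in Z)\leq \P(\pi_{\mathfrak p}(\omega_n)\in\pi_{\mathfrak p}(Z\cap\Gm))$, to obtain
\[
\P(\omega_n\in Z)\ll \frac{1}{\mathrm N(\mathfrak p)}+\mathrm N(\mathfrak p)^{3\dim G/2}\left(1-\frac{1}{|A|\,\mathrm N(\mathfrak p)^{2\dim G}}\right)^n.
\]

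Finally I would optimize the choice of $\mathfrak p$. For the polynomial-decay cases (the non-semisimple case of Theorem~\ref{t_main_new} and all of Theorem~\ref{t_main_new_function_fields}), I would invoke Landau's prime ideal theorem (or its function-field analogue via the prime polynomial theorem) to produce $\mathfrak p\in\mathscr P$ with $\mathrm N(\mathfrak p)\asymp n^{1/(10\dim G)}$; both terms on the right then contribute $O(n^{-1/(10\dim G)})$. For the exponential-decay case (semisimple $G$ over a number field), I would invoke superstrong approximation (Salehi Golsefidy--Varj\'u) to replace the parenthesised spectral factor by $(1-\eps)^n$ with $\eps$ independent of $\mathfrak p$, and then take $\mathrm N(\mathfrak p)\asymp e^{\lambda n}$ with $\lambda$ small enough (depending on $\dim G$ and $\eps$) that the second term is still dominated by $e^{-\lambda n}$. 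The main nontrivial input is the uniform boundedness of $[G(K(\mathfrak p)):\pi_{\mathfrak p}(\Gm)]$ in the non-simply connected case; modulo that standard fact, the rest is bookkeeping.
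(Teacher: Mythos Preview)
Your proposal is correct and follows essentially the same route as the paper: reduce modulo a single prime, use bounded index of $\pi_{\mathfrak p}(\Gm)$ in $G(K(\mathfrak p))$ plus Lang--Weyl to control $|\pi_{\mathfrak p}(Z\cap\Gm)|/|\pi_{\mathfrak p}(\Gm)|$, apply the spectral bound from Lemma~\ref{l_adjacency operator}, and optimize the norm of $\mathfrak p$ (polynomially in the general case, exponentially in the semisimple case via superstrong approximation). Two minor remarks: for the bounded-index statement in the non-simply-connected number-field case, the paper simply cites Nori \cite[\S~5]{nori_subgroups_gln} directly rather than passing to the simply connected cover and a field extension as you sketch; and the paper actually takes $\mathrm N(\mathfrak p)\asymp n^{1/(3\dim G)}$, obtaining the sharper bound $n^{-1/(3\dim G)}$, though your choice $n^{1/(10\dim G)}$ also suffices for the stated theorems.
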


\begin{proof}
Let $\p$ be a prime of sufficiently large norm, to be determined later, and of inertia degree one over $\Q$ in the case where $K$ is a number field. Denote by $\pi_{\p}\colon \Gm \to G(K(\p))$ the reduction morphism. We note that the index of $\pi_{\p}(\Gm)$ in $G(K(\p))$ is bounded, independently of $\p$. Indeed, for function fields, this follows from strong approximation \cite{prasad_strong_approximation}; for number fields (where $G$ is not necessarily simply connected), see \cite[\S~5]{nori_subgroups_gln}.
Then, by the Lang--Weyl estimate, $|\pi_{\p}(\Gm)| \asymp \text N(\p)^{\dim(G)}$, and $|\pi_{\p}(Z\cap \Gm)| \ll \text N(\p)^{\dim(G)-1}$, so $|\pi_{\p}(Z\cap \Gm)|\ll |\pi_p(\Gm)|/\text N(\p)$. 
Now, reasoning as in \eqref{eq_diaconis} in the proof of Lemma~\ref{lemma_sieve_polynomial}, we get 
\begin{align*}
\P(\omega_n \in Z)&\leq \P(\pi_{\p}(\omega_n) \in \pi_{\p}(Z\cap \Gm)) \\
&\leq \frac{|\pi_{\p}(Z\cap \Gm)|}{|\pi_{\p}(\Gm)|} + |\pi_{\p}(Z\cap \Gm)||\pi_{\p}(\Gm)|^{1/2} \cdot \left(1-\frac{1}{|A||\pi_{\p}(\Gm)|^2}\right)^n.
\end{align*}
If we choose $\p$ of norm $\asymp n^{1/(3\,\dim\, G)}$, we get $\P(\omega_n\in Z) \ll n^{-1/(3\,\dim\, G)}$, which proves Theorem~\ref{t_main_new} for $G/\urad(G)$ semisimple, as well as Theorem~\ref{t_main_new_function_fields}. Assume now that $G$ is semisimple. Then, the quotients $\pi_{\p}(\Gm)$ are uniform expanders \cite{Varju_expansion}, hence (in the notation of the proof of Lemma~\ref{l_adjacency operator}) $\pi_*$ is bounded away from one, and we can choose $\p$ of norm $\asymp e^{n/C}$, so that $\P(\omega_n\in Z) \ll e^{-n/C}$. This concludes the proof of Theorem~\ref{t_main_new}. (The statement for $G$ semisimple and $Z=C(K)$ is known and can be found for instance in \cite[Proposition 2.7]{lubotzky_meiri_powers}.)
\end{proof}

Even in the case $Z=C(K)$, in general we cannot get exponential decay, as \eqref{r_intro_example} demonstrates. We conclude this section with some remarks on Theorem~\ref{t_main_new} and Theorem~\ref{t_main_new_function_fields}.
\begin{remark}
\label{r_no_assumptions}
In the proof of Theorem~\ref{t_main_new} for $G$ simply connected, we have not used any assumption on $Z$, beyond the fact that it is a thin set. Therefore, for simply connected groups $G$, in Theorem~\ref{t_main_new}  it is not necessary to assume that the varieties are normal and the morphisms are finite and ramified. (We recall that, in fact, $G$ does not admit absolutely irreducible nontrivial unramified covers.) 
The same remark applies to Corollary~\ref{c_main_irreducibility} and Corollary~\ref{c_main_new}, which will be proved in \S~\ref{sec_pullback_argument}.

\end{remark}

\begin{remark}
\label{r_lubotzky_zuk}
Let $K$ be a global function field and let $G$ and $\Gm$ be as in Theorem~\ref{t_main_new_function_fields}. There exists a finite place $\nu$ of $K$ such that, if $\nu\in S$, then the quotients $G(K(\p))\times G(K(\q))$ of $\Gm=G(\ca O_{S})$ are uniform expanders, for $\p\neq \q$ of sufficiently large norm (see Lubotzky and Zuk \cite[Theorem 4.4]{lubotzky_zuk_tau}). If such $\nu$ belongs to $S$ then, using Theorem~\ref{t_general}\eqref{i_exponential}, we deduce that we can get exponential decay in Theorem~\ref{t_main_new_function_fields}:
\[
\P(\omega_n\in Z) \leq e^{-n/C}.
\]
In fact, conjecturally Theorem 4.4 of \cite{lubotzky_zuk_tau} should hold for every $S$, see \cite[remarks after Theorem 4.4]{lubotzky_zuk_tau}, so that in Theorem~\ref{t_main_new_function_fields} one would get exponential decay for every $S$. The same remark applies to Corollaries~\ref{c_function_fields_galois} and~\ref{c_main_function_fields_2}, which will be proved in \S~\ref{sec_pullback_argument} and \S~\ref{sub_characteristic_polynomial}.

\end{remark}

\begin{remark}
\label{rem_function_fields}
In Theorem~\ref{t_main_new_function_fields}, we require that $\Gm=G(\ca O_S)$ is finitely generated (which, of course, is a necessary assumption for our method). We note that this holds in many cases. For instance, if $G$ is $K$-simple, then $G(\ca O_S)$ is finitely generated unless $\sum_{\nu\in S}\text{rank}(G(K_\nu))=1$ (see \cite[Remark 2.2]{lubotzky_weiss}).
\end{remark}

\begin{remark}
\label{rem_function_general}
The proof of Theorem~\ref{t_main_new_function_fields} works under more general assumptions. For instance, let $S$ be a finite set of places, and $L$ be simply connected, semisimple, such that $\prod_{\nu \in S} L_i(K_\nu)$ is non-compact for every $K$-simple factor $L_i$ of $L$. Let $U$ be unipotent and $K$-split (i.e., it admits a composition series with factors isomorphic to $\mathbb A_K^1$). Assume that $L$ acts on $U$, and set $G=U\rtimes L$ and $\Gm=G(\ca O_S)$. Assume that $\Gm$ is finitely generated.

Then, Theorem~\ref{t_main_new_function_fields} holds for $G$ and $\Gm$. Indeed, $U$ is isomorphic to some $\mathbb A_K^m$ as a $K$-variety (see \cite[Remark A.3]{kambayashi}), and so satisfies strong approximation with respect to $S$. Now, as a $K$-variety $G\cong \mathbb A_K^m \times L$, therefore $G$ satisfies strong approximation with respect to $S$ and the claim follows from Theorem~\ref{t_general}(\ref{i_exponential}). 

\end{remark}

\section{General case: a pullback argument}
\label{sec_pullback_argument}
In this section we will prove Theorem~\ref{t_main_new} and Corollaries~\ref{c_main_irreducibility}, \ref{c_main_new} and \ref{c_function_fields_galois}.

We assume first that $K$ is a number field, and $G$ is a connected linear algebraic $K$-group, such that $G/\urad(G)$ is either trivial or semisimple. In the first case, $G$ is unipotent, hence simply connected. 
This case was considered in \S~\ref{sub_proof_simply_connected}, hence we may assume that $G/\urad(G)$ is semisimple. Then, $G$ admits a universal cover (\cite[Theorem 18.25 and Remark 18.27]{milne_algebraic_groups}),
that is, a connected simply connected linear algebraic $K$-group $\wt G$, equipped with an isogeny (i.e., surjective homomorphism with finite central kernel) $p\colon \wt G\to G$. We fix this and other notation, as follows.

\begin{notationlist}
\label{notation_2}
\begin{itemize}
\item[$\diamond$] Notation list~\ref{notation_1} holds.
\item[$\diamond$] $\thickbar K$ is an algebraic closure of $K$.
\item[$\diamond$] $G$ is such that $G/\urad(G)$ is semisimple.
    \item[$\diamond$] $\wt G$ is the universal cover of $G$, with isogeny $p\colon \wt G \to G$.
    \item[$\diamond$] $E$ is a finite extension of $K$ such that $ p^{-1}(\Gm)\subseteq \wt G(E)$.
    \item[$\diamond$] $\wt \Gm:= p^{-1}(\Gm)$.
    \item[$\diamond$] $\wt A :=p^{-1}(A)$, a finite symmetric generating multiset of the Zariski dense subgroup $\wt \Gamma$ of $\wt G_E(E)$.
    \item[$\diamond$] $\wt{\omega_n}$ is the $n$-th step of the random walk on $\Cay(\wt \Gm,\wt A)$, starting at the identity.
\end{itemize}
\end{notationlist}

(In the definition of $\wt \Gm$ and $\wt A$, $p$ denotes the morphism $\wt G(\thickbar K)\to G(\thickbar K)$ induced on $\thickbar K$-points.) Now, let $V$ be a  geometrically irreducible $K$-variety, with $\dim(V)=\dim(G)$, and let $\pi: V \rightarrow G$ be a dominant morphism. Base change to $E$, and get a pullback diagram
\begin{equation}
\label{diagram}
 \begin{tikzcd}
\wt V:=V_E\times_{G_E} \wt G_E \arrow{r}{\pi'} \arrow[swap]{d}{p'} & \wt G_E \arrow{d}{p} \\%
V_E \arrow{r}{\pi}& G_E
\end{tikzcd}
\end{equation}

\begin{lemma}
\label{l_pullback}
Let $Z=\pi(V(K))$. Under Notation list~\ref{notation_2}, one of the following holds:
\begin{itemize}
    \item[(i)] There exists $C>0$ such that $\P(\omega_n \in Z)\leq \Xi(C,n)$.
    \item[(ii)] There exists an irreducible component $W$ of $\wt V$ such that $\pi'|_W\colon W\to \wt G_E$ has degree one. In particular, there exists a rational map $\theta\colon \wt G_E \dashrightarrow V_E$ such that $\pi\circ \theta = p$. 
\end{itemize}
\end{lemma}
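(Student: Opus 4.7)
The plan is to lift the problem to the universal cover $\wt G$, where the simply connected case of Theorem~\ref{t_main_new} (established in \S~\ref{sub_proof_simply_connected}, and applicable without normality or ramification hypotheses by Remark~\ref{r_no_assumptions}) is already available, and then to transfer the conclusion back down to $G$ via the projection $p$.

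First I would record the compatibility of the two random walks. Since $\wt A = p^{-1}(A)$ has constant fibers of size $|\ker(p|_{\wt \Gm})|$, pushing forward a uniformly chosen step in $\wt A$ through $p$ produces a uniformly chosen step in $A$, so $p(\wt\omega_n)$ has the same distribution as $\omega_n$. Moreover, $\wt \Gm$ is finitely generated (because $\ker p$ is finite) and Zariski dense in $\wt G_E(E)$ (because $p$ is a finite surjection), and $\wt A$ is symmetric with $1\in \wt A$, so the simply connected case of Theorem~\ref{t_main_new} applies to $\wt\omega_n$ on $\mathrm{Cay}(\wt\Gm,\wt A)$.

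Next I would decompose $\wt V = W_1\cup\cdots\cup W_s$ into irreducible components. Since $K$ has characteristic zero, $p$ is étale, hence so is its base change $p'\colon \wt V\to V_E$. Because $V_E$ is irreducible, every generic point of $\wt V$ lies above the generic point of $V_E$, so each $W_j$ dominates $V_E$ and in particular $\dim W_j = \dim V_E = \dim \wt G_E$. Combined with the dominance of $\pi$ and the finiteness of $p$, this forces each $\pi'|_{W_j}\colon W_j\to \wt G_E$ to be dominant and separable, of some finite degree $d_j \geq 1$.

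Finally I would split into two cases. If some $d_j = 1$, then $\pi'|_{W_j}$ is birational; picking a rational inverse $\sigma\colon \wt G_E \dashrightarrow W_j$, the rational map $\theta := p'|_{W_j}\circ \sigma$ satisfies $\pi\circ\theta = p\circ \pi'|_{W_j}\circ \sigma = p$ by commutativity of~\eqref{diagram}, giving~(ii). Otherwise every $d_j \geq 2$, and then
\[
    \pi'(\wt V(E)) \;=\; \bigcup_{j=1}^s \pi'|_{W_j}(W_j(E))
\]
is a thin subset of $\wt G_E(E)$. The pullback square yields the inclusion $\{\omega_n\in Z\}\subseteq \{\wt\omega_n\in \pi'(\wt V(E))\}$, because if $p(\wt\omega_n) = \pi(v)$ for some $v\in V(K)\subseteq V_E(E)$, then $(v,\wt\omega_n)\in \wt V(E)$ and $\pi'(v,\wt\omega_n) = \wt\omega_n$. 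Applying the simply connected case of Theorem~\ref{t_main_new} to the walk $\wt\omega_n$ on $\wt G_E$—and noting that $\wt G$ is semisimple iff $G$ is and that $\dim \wt G = \dim G$, so the formula for $\Xi$ agrees on both sides—gives $\P(\wt\omega_n \in \pi'(\wt V(E)))\leq \Xi(C,n)$, which is~(i). The main obstacle is the geometric bookkeeping of the third paragraph: one must genuinely use the étaleness of $p$ (and hence of $p'$) to conclude that the components $W_j$ are equidimensional over $V_E$ and that each $\pi'|_{W_j}$ is dominant and separable, so that the union above is honestly a thin set in the sense required by Theorem~\ref{t_main_new}.
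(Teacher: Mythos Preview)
Your proof is correct and follows essentially the same approach as the paper: lift the walk to $\wt\Gm$ via $p$, observe that $\{\omega_n\in Z\}$ is contained in the event that $\wt\omega_n$ lands in $\pi'(\wt V(E))$, and apply the simply connected case unless some component of $\wt V$ maps birationally to $\wt G_E$. Your write-up is in fact more detailed than the paper's on the geometric point (using \'etaleness of $p'$ and going-down to see each component $W_j$ dominates $V_E$, hence $\pi'|_{W_j}$ is dominant separable), whereas the paper simply asserts that if (ii) fails then the relevant set is thin; the only minor difference is that the paper works with the subset $\pi'(p'^{-1}(V(K)\cap\pi^{-1}(\Gm)))$ rather than your larger $\pi'(\wt V(E))$, which is immaterial.
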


\begin{proof}
In this proof, we will denote by $p,p',\pi,\pi'$ the morphisms induced on $\thickbar K$-points. 
Note, first, that $\P(\omega_n \in Z)=\P(\omega_n \in Z \cap \Gm) = \P(\wt{\omega_n}\in p^{-1}(Z\cap \Gm))$. Set now $Y:=V(K)\cap \pi^{-1}(\Gm)$. It follows from \eqref{diagram} that $\pi'(p'^{-1}(Y))=p^{-1}(Z\cap \Gm)$, from which $\P(\omega_n \in Z)=\P(\wt{\omega_n}\in \pi'(p'^{-1}(Y)))$. Moreover,
$p'^{-1}(Y)$ is a subset of $\wt V(E)$. Assume that (ii) does not hold; then $\pi'(p'^{-1}(Y))$ is a thin set of $\wt G_E(E)$. Since the complexity of $\pi'(p'^{-1}(Y))$ is bounded in terms of $G$ and the complexity of $Z$, (i) follows from Theorem~\ref{t_main_new} for simply connected groups, proved in \S~\ref{sub_proof_simply_connected} (see also Remark~\ref{r_no_assumptions}).  
This concludes the proof of the lemma.
\end{proof}

At this point, we prove Theorem~\ref{t_main_new}.

\begin{proof}[Proof of Theorem~\ref{t_main_new}]
Since the constant $C$ in the statement depends on the complexity of $Z$, we may assume that either $Z=X(K)$ where $X$ is a proper closed subvariety of $G$, or $Z=\pi(V(K))$ for some $\pi\colon V\to G$, where $V$ is a normal geometrically irreducible $K$-variety, and $\pi$ is finite, surjective and ramified. The case $Z=X(K)$ follows from Lemma~\ref{l_proper_closed}, hence we assume $Z=\pi(V(K))$.

If conclusion (i) of Lemma~\ref{l_pullback} holds, we are done. Assume then that conclusion (ii) holds, seeking for a contradiction. Since the finiteness of a morphism is stable under base change and restriction to closed subvarieties, we have that $\pi'|_W\colon W\to \wt G_E$ is finite and surjective.
Since $\wt G_E$ is normal, Zariski's main theorem
implies that $\pi'|_W$ is an isomorphism. Set $f:=p'\circ (\pi'|_W)^{-1}\colon \wt G_E \to V_E$. Then $f$ is a finite morphism, and $\pi\circ f=p$ is unramified. Since $V_E$ is normal, 
it follows that $\pi$ is unramified. This contradicts the choice of $\pi$, and the proof is concluded.
\end{proof}

\begin{remark}
\label{r_necessary_ramified}

(i) In Theorem~\ref{t_main_new}, the assumption that the morphisms are ramified is necessary. For instance, assume that $\pi\colon G_1\to G$ is an isogeny of algebraic $K$-groups, of degree at least two, and let $\Gm\leq G(K)$ be finitely generated and Zariski dense. Then, still denoting by $\pi$ the map on $\thickbar K$-points $G_1(\thickbar K)\to G(\thickbar K)$, we have that $Z:=\pi(\pi^{-1}(\Gm)\cap G_1(K))$ is a finite index subgroup of $\Gm$ (see, e.g., \cite[Lemma 16.4.14]{lubotzky_segal}). It is easy, then, to see that a random walk on a Cayley graph of $\Gm$ hits elements of $Z$ with positive probability, so that Theorem~\ref{t_main_new} fails.

(ii) The examples given in (i) cover essentially all the cases excluded in Theorem~\ref{t_main_new}. Indeed, in Theorem~\ref{t_main_new}, we can rephrase the assumption that the morphism $\pi_i\colon V_i\to G$ is ramified, by requiring that the morphism $\pi_i\colon: (V_i)_E\to G_E$ is not an isogeny of algebraic groups; that is, there does not exist an isomorphism of $E$-varieties $\phi_i\colon (V_i)_E \to G_i$, where $G_i$ is an algebraic $E$-group, and an isogeny $f_i\colon G_i \to G_E$ such that $f_i\circ \phi_i=\pi_i$. 


\end{remark}

\begin{remark}
\label{r_not_normal}
In Theorem~\ref{t_main_new}, the assumption that the varieties are normal is not crucial. Some modification in the statement, however, is needed, because ramification can be resolved by the normalization map. See Lemma~\ref{l_pullback} for a reformulation.
\end{remark}

We now deduce Corollaries~\ref{c_main_new} and~\ref{c_function_fields_galois} on generically Galois covers, and Corollary \ref{c_main_irreducibility} on irreducibility of fibres.

\begin{proof}[Proof of Corollaries~\ref{c_main_new} and~\ref{c_function_fields_galois}]
We first prove Corollary~\ref{c_main_new}, so that $K$ is a number field. Let $V$ be a normal irreducible $K$-variety,  and $\pi\colon V\to G$ be finite, surjective and generically Galois, with group $\ca G$. We may assume $\deg(\pi)\geq 2$, otherwise the statement is trivial. Let $\Omega$ be the set of maximal subgroups of $\ca G$. For $\ca H\in \Omega$, set $V_{\ca H}:=V/\ca H$, a normal variety,
and denote by $\pi_{\ca H}\colon V_{\ca H}\to G$ the induced finite morphism.
Set $Z:=\cup_{\ca H\in \Omega} \pi_{\ca H}(V_{\ca H}(K))$, and let $U$ be an open dense subset of $G$ such that the cover $\pi^{-1}(U)\to U$ is \'etale. The set of elements $g$ of $U(K)$ whose specialized Galois group $\ca G_g$ is not generic is contained in $Z$ (see, e.g., \cite[Proposition 3.3.1]{serre_topics_galois}). In view of Lemma~\ref{l_proper_closed}, it is then sufficient to bound $\P(\omega_n\in Z)$.

Note, now, that if $V_{\ca H}$ is not geometrically irreducible, then $V_{\ca H}(K) = \varnothing$, and so can be removed from the definition of $Z$. On the other hand, if $V_{\ca H}$ if geometrically irreducible, then $\pi_{\ca H}$ is ramified, by the assumptions of Corollary~\ref{c_main_new}. In particular, $Z$ is a thin subset of $G(K)$ that satisfies the assumptions of Theorem~\ref{t_main_new}; so Theorem~\ref{t_main_new} holds for $Z$ and Corollary~\ref{c_main_new} follows.

The proof of Corollary \ref{c_function_fields_galois} is identical, using Theorems~\ref{t_main_new_function_fields} (which has no ramification assumption) instead of Theorem~\ref{t_main_new}.
\end{proof}

\begin{proof}[Proof of Corollary \ref{c_main_irreducibility}] We are under Notation list~\ref{notation_2}. Let $V$ be a normal irreducible $K$-variety, and $\pi\colon V\to G$ be a finite surjective morphism. Let 
$L:=\thickbar K \cap K(V)$, 
so that $K(G)\subseteq LK(G)\subseteq K(V)$ and the morphism $\pi$ factors as $\pi= f\circ \phi$, where $\phi\colon V \to G_L$ and $f\colon G_L \to G$ is the base change morphism. Then, we may regard $V$ as an absolutely irreducible $L$-variety. 
Up to replacing $E$ by $EL$, we may assume that $E$ contains $L$. Set $V_E:=V \times_L E$. Still denoting by $\phi$ and $f$ the induced morphisms $V_E\to G_E$ and $G_E \to G$, respectively, and by $\pi$ their composition, we have a diagram
\[
 \begin{tikzcd}
\wt V:=V_E\times_{G_E} \wt G_E \arrow{r}{\phi'} \arrow[swap]{d}{p'} & \wt G_E \arrow{d}{p} \\%
V_E \arrow[bend right]{rr}{\pi}\arrow{r}{\phi}& G_E\arrow{r}{f} &G
\end{tikzcd}
\]
By assumption and by the choice of $L$, every nontrivial subcover of $\phi$ is ramified. Since $p$ is unramified, it follows that $\wt V$ is irreducible. Let $\widehat V$ be the normalization of $\wt G_E$ in the Galois closure of $E(\wt V)/E(\wt G_E)$. We get a generically Galois morphism $\psi \colon \widehat V \to \wt G_E$, whose group we denote by $\ca G$. Then
\begin{align*}
&\P(\pi^{-1}(\omega_n) \mbox{ is not integral}) \\
&\qquad=\P(\phi^{-1}(\omega_n) \mbox{ is not integral})\\
&\qquad \leq \P(\phi'^{-1}(\wt \omega_n) \mbox{ is not integral}) \\
&\qquad \leq \P(\ca G_{\wt \omega_n} \neq \ca G) \\
&\qquad \leq \Xi(C,n),
\end{align*}
where the last inequality follows from Corollary \ref{c_main_new} (see also Remark \ref{r_no_assumptions}). This concludes the proof.
\end{proof}

\section{Applications}

In this section we prove Corollaries~\ref{c_main_char_pol},~\ref{c_main_function_fields_2} and~\ref{c_main_fixed_points}.

\subsection{Corollaries~\ref{c_main_char_pol} and~\ref{c_main_function_fields_2}: Characteristic polynomial}
\label{sub_characteristic_polynomial}
Let $K$ be a global field, and let $G$ be as in Corollary~\ref{c_main_char_pol} or~\ref{c_main_function_fields_2}.

We will use that $G$ has the HP over $K$, which, of course, follows from Theorems~\ref{t_main_new} and~\ref{t_main_new_function_fields}. (In fact, this is known for more general $G$ and $K$, see \cite{baysoroker_fehn_petersen, colliotthelene_sansuc}.)

Fix an embedding $\rho\colon G \hookrightarrow \GL_N$. 
Let $F$ be the largest separable extension of $K(G)$ contained in the splitting field of $\chi_{G,\rho}$ over $K(G)$.
Let $V$ be the normalization of $G$ in $F$, and let $\pi\colon V\to G$ be the corresponding finite surjective morphism, which is generically Galois with group $\ca G:=\Gal(F/K(G))$.

We now recall some preliminary facts from \cite[\S~2]{kowalski_jouve_zywina} or \cite{prasad2003existence}. For an element $g\in G(K)$, let $K_g$ the splitting field of $\det(X-g^\rho)$ over $K$. If $g$ is semisimple or $K$ is a number field, then $K_g$ is a Galois extension of $K$, which is independent of the choice of the embedding $\rho$. 
For every $g\in G(K)$, let $\ca G_g$ be the specialized group of $\pi$ at $g$. The following holds:
\begin{itemize}
\item[$(\star)$] There exists an open dense subset $U_0$ of $G$ such that, for every $g\in U_0(K)$, $K_g/K$ is Galois and $\ca G_g \cong\Gal(K_g/K)$.
\end{itemize}

Now, if $K$ is a number field, $K_g = K_{g^\phi}$, where $\phi\colon G \to G/\urad(G)$ (see \cite[Lemma 2.3]{kowalski_jouve_zywina}). In particular, if $K$ is a number field we may replace $G$ by $G^\phi$, $\Gm$ by $\Gm^\phi$, and assume that $G$ is semisimple. Note that this holds by assumption if $K$ is a function field.
\begin{itemize}

\item[$(\star\star)$] \label{erro} From now on, therefore, $G$ is a semisimple group.
\end{itemize}


Let $K^s$ be a fixed separable closure of $K$. For a maximal $K$-torus $T$ of $G$, we let $X(T) := \Hom(T_{K^s}, (\mathbb G_m)_{K^s})$, the character group of $T$, and let $K_T$ be the splitting field of $T$, a Galois extension of $K$ (\cite[Theorem 12.18]{milne_algebraic_groups}). Let $W(G,T)$ denote the $K^s$-points of the group scheme $\text N_G(T)/T$. Then $W(G,T)$ is a finite group, called the absolute Weyl group of $G$ with respect to $T$. Now, $W(G,T)$ and $\Gal(K_T/K)$ can be viewed as subgroups of $\Aut(X(T))$; define, then, $\Pi(G,T):=\gen{W(G,T), \Gal(K_T/K)} = W(G,T)\Gal(K_T/K)\leq \Aut(X(T))$. This is a finite group and, if $T$ is $K$-split, $\Pi(G,T)=W(G,T)$. It turns out that the isomorphism type of this group is independent of the choice of $T$, so that we can define $\Pi(G):=\Pi(G,T)$ for any choice of $T$.

It follows from a result of Prasad and Rapinchuk \cite{prasad2003existence}, and from the fact that $G$ has the HP over $K$, that $\Pi(G)$ is the generic Galois group of $\pi$:


\begin{lemma}
\label{l_generic_galois_group}
$\ca G\cong \Pi(G)$.
\end{lemma}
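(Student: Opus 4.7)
The plan is to establish both inclusions $\Pi(G) \hookrightarrow \mathcal{G}$ and $\mathcal{G} \hookrightarrow \Pi(G)$, each by comparing $\mathcal{G}$ with the specialized group $\mathcal{G}_g$ at a carefully chosen $g \in G(K)$. The two general facts I will use throughout are that $\mathcal{G}_g$ is always a subquotient of $\mathcal{G}$, and that, by $(\star)$, $\mathcal{G}_g \cong \Gal(K_g/K)$ for every $g$ in the open dense subset $U_0(K)\subseteq G(K)$.

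For $\Pi(G) \hookrightarrow \mathcal{G}$ I would appeal to Prasad and Rapinchuk \cite{prasad2003existence}, whose theorem (applied, say, to the finitely generated Zariski dense subgroup $\Gm$) furnishes an element $g_0 \in \Gm \cap U_0(K)$ whose characteristic polynomial splitting field $K_{g_0}$ satisfies $\Gal(K_{g_0}/K)\cong \Pi(G)$. The subquotient $\mathcal{G}_{g_0} \cong \Gal(K_{g_0}/K) \cong \Pi(G)$ of $\mathcal{G}$ then embeds $\Pi(G)$ into $\mathcal{G}$.

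For the reverse inclusion I would use the Hilbert property of $G$ (already at our disposal) to produce $g \in U_0(K)$ with $\mathcal{G}_g \cong \mathcal{G}$, so that $\mathcal{G} \cong \Gal(K_g/K)$. It then suffices to argue that $\Gal(K_g/K) \hookrightarrow \Pi(G)$ for every such $g$. Picking a maximal $K^s$-torus $T'$ of $G$ containing $g$, the eigenvalues of $g^\rho$ are the values $\chi(g)$ as $\chi$ ranges over the weights of $\rho|_{T'}$; the absolute Galois group $\Gal(K^s/K)$ then acts on $K_g$ via its combined action on $X(T')$ and on the finite set of maximal tori through $g$, and this action factors through $W(G,T')\cdot \Gal(K_{T'}/K)\cong \Pi(G,T')\cong \Pi(G)$.

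The main obstacle, in my view, will be the last structural identification: precisely interpreting the absolute Galois action on the multiset of eigenvalues as an element of $W(G,T')\cdot \Gal(K_{T'}/K)$, handling the non-canonical choice of $T'$, and verifying that the resulting subgroup of $\Aut(X(T'))$ is conjugation-independent. This torus-independence is also the technical core of \cite{prasad2003existence}, so I expect to invoke it rather than reprove it, reducing the lemma to the short combination sketched above.
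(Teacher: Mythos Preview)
Your two-step plan is correct and matches the paper's proof. Two comments on execution. First, the paper dispatches your anticipated ``main obstacle'' in one line by restricting $g$ to the open dense locus $U$ of regular semisimple elements and citing \cite[Lemma~2.4]{kowalski_jouve_zywina}, which gives $K_g = K_{T_g}$ and hence $\Gal(K_g/K) \leq \Pi(G,T_g)\cong \Pi(G)$ directly; for such $g$ the maximal torus $T_g$ is unique and Galois-stable, so there is no action on a ``finite set of maximal tori'' to track, and the torus-independence issue you flag does not arise. Second, your passage from ``$\ca G_{g_0}\cong\Pi(G)$ is a subquotient of $\ca G$'' to ``$\Pi(G)$ embeds in $\ca G$'' implicitly assumes $g_0$ lies in the \'etale locus of $\pi$; the paper instead closes with a pure size comparison, combining $|\ca G| \leq |\Pi(G)|$ from the first step with $|\Pi(G)| \leq |\ca G|$ from the subquotient relation.
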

\begin{proof}
We divide the proof of the lemma in two parts.

(i) We show that $\ca G$ is isomorphic to a subgroup of $\Pi(G)$.

There exists an open dense subset $U$ of $G$ such that, for every $g\in U(K)$, $g$ is regular semisimple and, denoting by $T_g$ the maximal torus of $g$, $K_g = K_{T_g}$, so that $\Gal(K_g/K)$ is isomorphic to a subgroup of $\Pi(G)$ (see, e.g.,  \cite[Lemma 2.4]{kowalski_jouve_zywina}, which is stated over number field but does not use this). Let $U_0$ be as in $(\star)$. By the HP, we know that $\ca G_g\cong \ca G$ for at least one $g\in U\cap U_0(K)$, and in particular, $\ca G$ is isomorphic to a subgroup of $\Pi(G)$.

(ii) We show that $\ca G\cong \Pi(G)$.

In \cite[Theorem 3 and Remark 6]{prasad2003existence} (see also \cite[p. 242]{prasad2014generic}), it was shown that, if $G$ is semisimple, there exists a Zariski dense subset $U'$ of $G(K)$ such that, for every $g\in U'$, $g$ is regular semisimple and, with notation as in (i), $\Gal(K_{T_g}/K) = \Pi(G,T)$.  In particular, taking $g\in U'\cap U\cap U_0$, we get that $\ca G_g \cong \Gal(K_g/K) = \Gal(K_{T_g}/K) = \Pi(G,T)\cong \Pi(G)$. Hence $\Pi(G)$, as the Galois group of a specialization, is a sub-quotient of $\ca G$. From (i), we deduce that $\ca G\cong \Pi(G)$.
\end{proof}

\begin{remark}
In fact, we used only a  weak form of \cite{prasad2003existence}, namely, the existence of \textit{one} element $g\in G(K)$ such that $\Gal(K_g/K)\cong \Pi(G)$. This already implies that $\Pi(G)$ is the generic Galois group of $\pi$. In particular, by the HP, it follows automatically that the set of elements $g\in G(K)$ such that $\Gal(K_g/K)\cong \Pi(G)$ is Zariski dense in $G$.
\end{remark}

\begin{proof}[Proof of Corollaries~\ref{c_main_char_pol} and ~\ref{c_main_function_fields_2}]

By $(\star\star)$, we may assume that $G$ is semisimple. Let $Z$ be the set of elements $g$ of $G(K)$ such that $\det(X-g^\rho)$ has an inseparable irreducible factor or $\Gal(\det(X-g^\rho)/K)$ is not isomorphic to $\Pi(G)$. We want to show that $\P(\omega_n \in Z)\leq e^{-n/C}$. By Lemma~\ref{l_proper_closed}, it is sufficient to show that $\P(\omega_n \in Z')\leq e^{-n/C}$, where $Z':=Z\cap U\cap U_0$, and $U_0$ and $U$ were defined in $(\star)$ and in the proof of Lemma~\ref{l_generic_galois_group}, respectively. In particular, for $g\in Z'$, $g$ is regular semisimple, with maximal torus $T_g$, $K_g=K_{T_g}$, and $\ca G_g\cong \Gal(K_g/K)$.

Assume first that $G$ is simply connected. Then the statement follows from Corollary~\ref{c_main_new} (if $K$ is a number field) or Corollary~\ref{c_function_fields_galois} (if $K$ is a function field), applied to the cover $\pi\colon V \to G$. In particular, from now on we may assume that $G$ is not simply connected; hence, $K$ is a number field. We use now Notation list~\ref{notation_2}.

We have that $\P(\omega_n \in Z') = \P(\wt{\omega_n}\in p^{-1}(Z' \cap \Gm)$), where $p^{-1}(Z'\cap \Gm)\leq \wt G_E(E)$. Now let $g\in Z'$ and, for ease of notation, set $T:=T_g$. Then $\Gal(K_{T}/K) \nsupseteq W(G,T)$. Let $\wt g \in p^{-1}(Z'\cap \Gm)$; note that $\wt g$ is regular semisimple, and denote by $\wt T$ the maximal $E$-torus of $\wt G_E$ containing $\wt g$. Then $p(\wt T) = T_{E}$ and $E_{\wt T} = EK_T$, where $E_{\wt T}$ denotes the splitting field of the torus $\wt T$. We see that $\Gal(E_{\wt T}/E) \nsupseteq W(\wt G_E,\wt T)\cong W(G,T)$, from which $\Gal(E_{\wt g}/E)\not\cong \Pi(\wt G_E)$. Then, by the simply connected case, we deduce that $\P(\wt{\omega_n}\in p^{-1}(Z' \cap \Gm)) \leq e^{-n/C}$, which concludes the proof.
\end{proof}

\subsection{Corollary~\ref{c_main_fixed_points}: Fixed points}

\begin{proof}[Proof of Corollary~\ref{c_main_fixed_points}] We use Notation list~\ref{notation_2}. By assumption, $p^{-1}(\Gm)\leq \wt G(K)$, hence we may take $E=K$.

Note, first, that the set of elements of $G(K)$ fixing a point on $X(K)$ is $\pi(V(K))$. Let $W$ be an irreducible component of $V$. By assumption, there exists a Zariski dense subset of $G(\thickbar K)$ consisting of elements fixing finitely many points on $X(\thickbar K)$. This implies that, if $\pi|_W$ is dominant, then $\pi|_W$ has finite degree. We can, therefore, apply Lemma~\ref{l_proper_closed} or Lemma~\ref{l_pullback} to each irreducible component $W$ and to the set $Z=\pi(W(K))$. 

We deduce that,
if conclusion (ii) of Corollary~\ref{c_main_fixed_points} does not hold, then there exists a rational map $\theta': \wt G \dashrightarrow V$ such that $\pi\circ \theta' = p$.

At this point, \cite[Proposition 5.4]{corvaja2007rational} shows that $\theta'$ is constant on the fibres of $p$. In particular, there exists a rational map $\theta''\colon G \dashrightarrow V$ such that $\pi\circ \theta'' = \text{id}$. Setting $\theta:=\pi_X\circ \theta''$, where $\pi_X\colon V\to X$ is the natural projection, proves the first part of item (i) in the statement. Assume now that $X$ is projective; let $U$ be the domain of $\theta$ and let $g\in G(K)\setminus U$. Let $C$ be a smooth irreducible $K$-curve passing through $g$ and having non-empty intersection with $U$. Since $X$ is projective, the rational map $\theta|_C\colon C \dashrightarrow X$ can be extended to a morphism  $\thickbar \theta\colon C \to X$. Then $\thickbar \theta (g) g = \thickbar \theta (g)$, which proves the last part of item (i). (This argument can be found in \cite[p. 594]{corvaja2007rational}.) This concludes the proof.
\end{proof}

An interesting special case of Corollary~\ref{c_main_fixed_points} comes from the action of $G$ on flag varieties. Fix a morphism $\rho\colon G\to \GL_N$, and let $V$ be an $N$-dimensional $K$-vector space. Let $\mathbf a=(a_1, \ldots, a_t)$ be a sequence of integers, with $t\geq 1$ and $0<a_1<\cdots < a_t < N$. Then $G(K)$ acts naturally on the set of $\mathbf a$-flags of $V$
\[
0 < W_1 < \cdots < W_t < V,
\]
where $W_i$ is a $K$-subspace of $V$ of dimension $a_i$. Similarly, $G(\thickbar K)$ acts on the set of $\mathbf a$-flags of $\thickbar V:=V \otimes_K \thickbar K$.

\begin{corollary}
\label{c_flags}
Assume that, with notation as above, $p^{-1}(\Gm)\leq \wt G(K)$, and assume that there exists a Zariski dense subset of $G(\thickbar K)$ consisting of elements fixing finitely many $\mathbf a$-flags of $\thickbar V$. Then one of the following holds: 
\begin{enumerate}
    \item[(i)] Every element of $G(K)$ fixes an $\mathbf a$-flag of $V$.
    \item[(ii)] There exists $C>0$ such that
    \[
    \P(\mbox{$\omega_n$ fixes an $\mathbf a$-flag of $V$})\leq \Xi(C,n).
    \]
\end{enumerate}
\end{corollary}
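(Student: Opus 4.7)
The plan is to realize Corollary~\ref{c_flags} as a direct application of Corollary~\ref{c_main_fixed_points} with a specific choice of $X$. Let $X = \mathrm{Fl}_{\mathbf a}(V)$ denote the $K$-variety parametrizing $\mathbf a$-flags of $V$; this is a projective $K$-variety (a closed subvariety of a product of Grassmannians), and $G$ acts on it through the representation $\rho\colon G \to \GL_N$ as a group scheme. By construction, the $K$-points of $X$ correspond to the $\mathbf a$-flags of $V$ and the $\thickbar K$-points to the $\mathbf a$-flags of $\thickbar V$, with the action matching the one described before the corollary.

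I would then check that the hypotheses of Corollary~\ref{c_main_fixed_points} are verified for this action. The assumption $p^{-1}(\Gm) \leq \wt G(K)$ is already part of the hypothesis of Corollary~\ref{c_flags}. The requirement that there exists a Zariski dense subset of $G(\thickbar K)$ whose elements fix only finitely many points of $X(\thickbar K)$ is exactly the assumption on $\mathbf a$-flags. Consequently, Corollary~\ref{c_main_fixed_points} applies, and one of its two alternatives holds.

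In the second alternative of Corollary~\ref{c_main_fixed_points}, we obtain that $\P(\omega_n \text{ fixes a point of } X(K)) \leq \Xi(C,n)$ for some $C>0$, which translates immediately to the desired conclusion (ii) of Corollary~\ref{c_flags}. In the first alternative, we obtain a rational map $\theta\colon G \dashrightarrow X$ with $\theta(g)g=\theta(g)$ on its domain; since $X$ is projective, the ``moreover'' part of Corollary~\ref{c_main_fixed_points}(i) guarantees that every element of $G(K)$ fixes some point of $X(K)$, i.e., every element of $G(K)$ stabilizes some $\mathbf a$-flag of $V$. This is conclusion (i) of Corollary~\ref{c_flags}.

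I do not anticipate any real obstacle in this argument, as the work is already done in Corollary~\ref{c_main_fixed_points}; the only point to verify carefully is that the natural $G$-action on $\mathrm{Fl}_{\mathbf a}(V)$ realizes the combinatorial action on flags and that $\mathrm{Fl}_{\mathbf a}(V)$ is projective, both of which are standard.
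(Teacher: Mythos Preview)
Your proposal is correct and follows exactly the paper's approach: identify the flag variety $X=\mathrm{Fl}_{\mathbf a}(V)$ as a projective $K$-variety on which $G$ acts via $\rho$, and then apply Corollary~\ref{c_main_fixed_points} directly, using the projectivity of $X$ for alternative (i). The paper's proof is even more terse than yours but makes the identical argument.
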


\begin{proof}
The set of $\mathbf a$-flags of $V$ are the $K$-points of a projective $K$-variety $X$ which goes under the name of flag variety (see \cite[p. 146]{milne_algebraic_groups}). The morphism $\rho\colon G \to \GL_N$ equips $G$ with an action on $X$. The statement, then, follows from Corollary~\ref{c_main_fixed_points}.
\end{proof}

\bibliography{references}

\newcommand{\etalchar}[1]{$^{#1}$}
\begin{thebibliography}{CDJ{\etalchar{+}}20}

\bibitem[{Bre}15]{Breuillard_superstrong}
E.~{Breuillard}.
\newblock {Approximate subgroups and super-strong approximation}.
\newblock In {\em Groups St Andrews 2013. Selected papers of the conference,
  St. Andrews, UK, August 3--11, 2013.}, pages 1--50. Cambridge: Cambridge
  University Press, 2015.

\bibitem[BSFP14]{baysoroker_fehn_petersen}
L.~Bary-Soroker, A.~Fehm, and S.~Petersen.
\newblock On varieties of {H}ilbert type.
\newblock {\em Ann. Inst. Fourier (Grenoble)}, 64(5):1893--1901, 2014.

\bibitem[CDJ{\etalchar{+}}20]{corvaja_zannier_demeio}
P.~Corvaja, J.L. Demeio, A.~Javanpeykar, D.~Lombardo, and U.~Zannier.
\newblock On the distribution of rational points on ramified covers of abelian
  varieties.
\newblock {\em arXiv preprint arXiv:2011.12840}, 2020.

\bibitem[Cor07]{corvaja2007rational}
P.~Corvaja.
\newblock Rational fixed points for linear group actions.
\newblock {\em Annali della Scuola Normale Superiore di Pisa-Classe di
  Scienze}, 6(4):561--597, 2007.

\bibitem[CTS87]{colliotthelene_sansuc}
J.L. Colliot-Th{\'e}l{\`e}ne and J.J. Sansuc.
\newblock Principal homogeneous spaces under flasque tori: applications.
\newblock {\em Journal of algebra}, 106(1):148--205, 1987.

\bibitem[CZ17]{corvaja_zannier_fundamental_group}
P.~Corvaja and U.~Zannier.
\newblock On the {H}ilbert property and the fundamental group of algebraic
  varieties.
\newblock {\em Mathematische Zeitschrift}, 286(1):579--602, 2017.

\bibitem[Dem20]{demeio2020}
J.L. Demeio.
\newblock Non-rational varieties with the {H}ilbert property.
\newblock {\em International Journal of Number Theory}, 16(04):803--822, 2020.

\bibitem[DSC93]{diaconis_saloff_comparison}
P.~Diaconis and L.~Saloff-Coste.
\newblock Comparison techniques for random walk on finite groups.
\newblock {\em The Annals of Probability}, pages 2131--2156, 1993.

\bibitem[FJ08]{fried_jarden3}
M.~D. Fried and M.~Jarden.
\newblock {\em Field arithmetic}, volume~11 of {\em Ergebnisse der Mathematik
  und ihrer Grenzgebiete. 3. Folge. A Series of Modern Surveys in Mathematics
  [Results in Mathematics and Related Areas. 3rd Series. A Series of Modern
  Surveys in Mathematics]}.
\newblock Springer-Verlag, Berlin, third edition, 2008.
\newblock Revised by Jarden.

\bibitem[FZ07]{ferretti2007equations}
A.~Ferretti and U.~Zannier.
\newblock Equations in the {H}adamard ring of rational functions.
\newblock {\em Annali della Scuola Normale Superiore di Pisa-Classe di
  Scienze}, 6(3):457--475, 2007.

\bibitem[GN11]{gorodnik_nevo_char_polynomial}
A.~Gorodnik and A.~Nevo.
\newblock Splitting fields of elements in arithmetic groups.
\newblock {\em arXiv preprint arXiv:1105.0858}, 2011.

\bibitem[JKZ13]{kowalski_jouve_zywina}
F.~Jouve, E.~Kowalski, and D.~Zywina.
\newblock Splitting fields of characteristic polynomials of random elements in
  arithmetic groups.
\newblock {\em Israel Journal of Mathematics}, 193(1):263--307, 2013.

\bibitem[KMT74]{kambayashi}
T.~{Kambayashi}, M.~{Miyanishi}, and M.~{Takeuchi}.
\newblock {\em {Unipotent algebraic groups}}, volume 414.
\newblock Springer, Cham, 1974.

\bibitem[Kow08]{kowalski_book_large_sieve}
E.~Kowalski.
\newblock {\em The large sieve and its applications: arithmetic geometry,
  random walks and discrete groups}, volume 175.
\newblock Cambridge University Press, 2008.

\bibitem[Kow10]{kowalski2010sieve}
E.~Kowalski.
\newblock {Sieve in expansion}.
\newblock {\em S{\'e}minaire Bourbaki}, page~63, 2010.

\bibitem[Liu21]{FeiLiu}
F.~Liu.
\newblock On {H}ilbert’s irreducibility theorem for linear algebraic groups.
\newblock {\em Annali della Scuola Normale Superiore di Pisa, Classe di
  Scienze}, in press:33, 2021.

\bibitem[LM12]{lubotzky_meiri_powers}
A.~Lubotzky and C.~Meiri.
\newblock Sieve methods in group theory {I}: Powers in linear groups.
\newblock {\em Journal of the American Mathematical Society}, 25(4):1119--1148,
  2012.

\bibitem[LR14]{lubotzky_rosenzweig}
A.~Lubotzky and L.~Rosenzweig.
\newblock The {G}alois group of random elements of linear groups.
\newblock {\em American Journal of Mathematics}, 136(5):1347--1383, 2014.

\bibitem[LS12]{lubotzky_segal}
A.~Lubotzky and D.~Segal.
\newblock {\em Subgroup growth}, volume 212.
\newblock Birkh{\"a}user, 2012.

\bibitem[LW54]{lang_weyl}
S.~Lang and A.~Weil.
\newblock Number of points of varieties in finite fields.
\newblock {\em American Journal of Mathematics}, 76(4):819--827, 1954.

\bibitem[LW93]{lubotzky_weiss}
A.~{Lubotzky} and B.~{Weiss}.
\newblock {Groups and expanders}.
\newblock In {\em Expanding graphs. Proceedings of the DIMACS workshop on
  expander graphs, May 11-14, 1992, Princeton University, NJ (USA)}, pages
  95--109. Providence, RI: American Mathematical Society, 1993.

\bibitem[LZ]{lubotzky_zuk_tau}
A.~Lubotzky and A.~Zuk.
\newblock {\em On property ($\tau$)}.
\newblock Available at \url{http://www.ma.huji.ac.il/ ~alexlub/}.

\bibitem[Mil17]{milne_algebraic_groups}
J.S. Milne.
\newblock {\em Algebraic groups: the theory of group schemes of finite type
  over a field}, volume 170.
\newblock Cambridge University Press, 2017.

\bibitem[MM18]{malle_matzat_igt}
G.~{Malle} and B.H. {Matzat}.
\newblock {\em {Inverse {G}alois theory}}.
\newblock Berlin: Springer, 2018.

\bibitem[Nor87]{nori_subgroups_gln}
M.V. Nori.
\newblock On subgroups of ${GL}_n(\mathbf{F}_p)$.
\newblock {\em Inventiones mathematicae}, 88(2):257--275, 1987.

\bibitem[PR94]{platonov_rapinchuk}
V.P. Platonov and A.S. Rapinchuk.
\newblock {\em Algebraic groups and number theory}, volume 139 of {\em Pure and
  Applied Mathematics}.
\newblock Academic Press, Inc., Boston, MA, 1994.
\newblock Translated from the 1991 Russian original by Rachel Rowen.

\bibitem[PR03]{prasad2003existence}
G.~Prasad and A.S. Rapinchuk.
\newblock Existence of irreducible $\mathbb{R}$-regular elements in
  {Z}ariski-dense subgroups.
\newblock {\em Mathematical research letters}, 10:21--32, 2003.

\bibitem[PR14]{prasad2014generic}
G.~Prasad and A.S. Rapinchuk.
\newblock Generic elements in {Z}ariski-dense subgroups and isospectral locally
  symmetric spaces.
\newblock {\em Thin groups and superstrong approximation}, 61:211--252, 2014.

\bibitem[Pra77]{prasad_strong_approximation}
G.~Prasad.
\newblock Strong approximation for semi-simple groups over function fields.
\newblock {\em Annals of Mathematics}, 105(3):553--572, 1977.

\bibitem[Riv08]{rivin2008walks}
I.~Rivin.
\newblock Walks on groups, counting reducible matrices, polynomials, and
  surface and free group automorphisms.
\newblock {\em Duke Mathematical Journal}, 142(2):353--379, 2008.

\bibitem[{San}81]{sansuc1981groupe}
J.J. {Sansuc}.
\newblock {Groupe de {B}rauer et arithm\'etique des groupes alg\'ebriques
  lin\'eaires sur un corps de nombres}.
\newblock {\em {J. Reine Angew. Math.}}, 327:12--80, 1981.

\bibitem[Sch65]{Schinzel}
A.~Schinzel.
\newblock On {H}ilbert's irreducibility theorem.
\newblock {\em Ann. Polon. Math.}, 16:333--340, 1965.

\bibitem[{Ser}89]{serre_mordell_weyl}
J.P. {Serre}.
\newblock {\em {Lectures on the Mordell-Weil theorem. Transl. and ed. by Martin
  Brown from notes by Michel Waldschmidt}}, volume E15.
\newblock Braunschweig etc.: Friedr. Vieweg \&| Sohn, 1989.

\bibitem[Ser16]{serre_topics_galois}
J.P. Serre.
\newblock {\em Topics in {G}alois theory}.
\newblock CRC Press, 2016.

\bibitem[SGV12]{Varju_expansion}
A.~Salehi~Golsefidy and P.P. Varj{\'u}.
\newblock Expansion in perfect groups.
\newblock {\em Geometric and Functional Analysis}, 22(6):1832--1891, 2012.

\bibitem[Zan10]{zannier2010hilbert}
U.~Zannier.
\newblock Hilbert irreducibility above algebraic groups.
\newblock {\em Duke Mathematical Journal}, 153(2):397--425, 2010.

\end{thebibliography}
\bibliographystyle{alpha}
\end{document}